\documentclass[10pt]{amsart}

\usepackage[margin=1.5in]{geometry}
\usepackage{mathtools}
\usepackage{amssymb}
\usepackage{enumitem}
\usepackage{hyperref}
\usepackage[nameinlink,noabbrev]{cleveref}

\hypersetup{
  hidelinks,
  pdftitle={Finite-Core Exhaustion and the Failure of Strict Sequential-Pressure Approximation in Coded Shifts},
  pdfauthor={C. Evans Hedges},
  pdfsubject={Coded shifts, topological pressure, and computability},
  pdfkeywords={Coded shifts, topological pressure, sequential pressure, computable analysis, equilibrium states}
}

\allowdisplaybreaks

\newtheorem{theorem}{Theorem}[section]
\newtheorem{proposition}[theorem]{Proposition}
\newtheorem{lemma}[theorem]{Lemma}
\newtheorem{corollary}[theorem]{Corollary}
\newtheorem{definition}[theorem]{Definition}
\theoremstyle{remark}
\newtheorem{remark}[theorem]{Remark}
\newtheorem{example}[theorem]{Example}

\newcommand{\A}{\mathcal A}
\newcommand{\G}{\mathcal G}
\newcommand{\Lcal}{\mathcal L}
\newcommand{\M}{\mathcal M}
\newcommand{\Ecal}{\mathcal E}
\newcommand{\Sigmafull}{\A^{\mathbb Z}}
\newcommand{\htop}{h_{\mathrm{top}}}
\newcommand{\hcon}{h_{\mathrm{con}}}
\newcommand{\hres}{h_{\mathrm{res}}}
\newcommand{\Xcon}{X_{\mathrm{con}}}
\newcommand{\Xres}{X_{\mathrm{res}}}
\newcommand{\eps}{\varepsilon}

\title[Finite-core exhaustion and strict pressure]{Finite-Core Exhaustion and
the Failure of\\Strict Sequential-Pressure Approximation in Coded Shifts}
\author{C. Evans Hedges}
\thanks{Independent researcher.}
\email{evans@hedgesfamily.com}
\date{8 September 2026}

\begin{document}

\begin{abstract}
Let \(X=X(\G)\) be a coded shift whose generating set uniquely represents its
concatenation set.  We prove that, for every continuous potential \(\varphi\),
the pressures of the finite-generator subshifts (the finite cores) converge
to the sequential pressure \(P_{\rm seq}(\varphi,\G)\), the supremum of free
energy over invariant measures giving full mass to the concatenation set.
Thus full sequential pressure is the exact condition for the finite cores to
recover global pressure.  The proof is a refinement of the inducing argument
of Burr, Das, Wolf, and Yang.

Given an arbitrary enumeration of the generators and a length-ordered
enumeration of the language, we obtain an algorithm which computes the global
pressure of every computable potential with full sequential pressure.  The set
of equilibrium states is recursively compact, and a unique equilibrium state
is computable from the same data.  For the zero potential,
\(h_{\rm con}\geq h_{\rm res}\) therefore implies computability of topological
entropy and of any unique measure of maximal entropy.  We also construct a
uniquely represented presentation of the full binary shift for which the zero
potential has full sequential pressure but is not a uniform limit of
potentials satisfying \(P_{\rm seq}>P_{\rm res}\), disproving a conjecture of
Burr, Das, Wolf, and Yang.
\end{abstract}

\keywords{Coded shifts, topological pressure, sequential pressure,
computable analysis, equilibrium states}
\subjclass[2020]{Primary 37B10; Secondary 37D35, 03D78}

\maketitle
\enlargethispage{3pt}

\section{Introduction}

Coded shifts, introduced by Blanchard and Hansel
\cite{BlanchardHanselCoded}, are specified by countable collections of finite
words, but the shift itself is the closure of their bi-infinite
concatenations.  The finite subcollections generate an increasing family of
sofic subshifts, which we call the finite cores.  The natural approximation
problem is to identify the limit of the finite-core pressures and determine
when that limit is the pressure of the full coded shift.

The closure may contain residual points which are not themselves
concatenations, and invariant measures carried by those points can have
free energy larger than that of every finite core.  Thus the finite-core limit
need not be the global pressure.  Our first result identifies the finite-core
limit exactly with sequential pressure.  Our second shows that the resulting
equality regime is genuinely larger than the closure of the
strict-pressure-gap regime: a uniquely represented presentation of the full
binary shift disproves a conjecture of Burr, Das, Wolf, and Yang.  The pressure
identity also turns finite-core approximation into a computability theorem.
In particular, under the effective hypotheses below, a unique equilibrium
state is computable from the same data even when it gives full mass to the
residual set.

\subsection{Coded presentations}

Fix a finite alphabet \(\A\) and a nonempty countable collection \(\G\) of
nonempty words over \(\A\).  The concatenation set \(\Xcon(\G)\) consists of
the points obtained by bi-infinite concatenation of words in \(\G\).  The coded
shift and the residual set of this presentation are
\[
 X=X(\G)=\overline{\Xcon(\G)},
 \qquad
 \Xres(\G)=X\setminus\Xcon(\G).
\]
We say that \(\G\) \emph{uniquely represents} \(\Xcon(\G)\) if every point in
\(\Xcon(\G)\) has a unique decomposition into generators, including a unique
position of the origin within its generator.

Choose any enumeration \(\G=\{g_1,g_2,\ldots\}\), with the list ending when
\(\G\) is finite.  Let \(F_m\) consist of the first \(m\) generators, or all
of \(\G\) if the list has already ended, and set
\[
 X_m=X(F_m).
\]
If \(\G\) is finite, this sequence is therefore constant at \(X\) after the
last generator.  We call the subshifts \(X_m\) the \emph{finite cores}.  The
effective presentation we use consists of the generator enumeration, in the
order supplied, and an enumeration of the language \(\Lcal(X)\), the set of finite
words occurring in \(X\), in nondecreasing order of word length.  All
computability statements below are uniform in these oracles and in the
additional computable data named in the statement.  Uniformity is understood
on the class of inputs satisfying the stated hypotheses.

Let \(\M_\sigma(X)\) be the invariant Borel probability measures on \(X\).  For
\(\varphi\in C(X)\), define
\[
 P_X(\varphi)=\sup_{\mu\in\M_\sigma(X)}
 \left(h_\mu(\sigma)+\int\varphi\,d\mu\right).
\]
An invariant measure attaining this supremum is an equilibrium state.  The
sequential and residual pressures are
\begin{align*}
 P_{\rm seq}(\varphi,\G)
 &=\sup\left\{h_\mu(\sigma)+\int\varphi\,d\mu:
                 \mu\in\M_\sigma(X),\
                 \mu(\Xcon(\G))=1\right\},\\
 P_{\rm res}(\varphi,\G)
 &=\sup\left\{h_\mu(\sigma)+\int\varphi\,d\mu:
                 \mu\in\M_\sigma(X),\
                 \mu(\Xres(\G))=1\right\}.
\end{align*}
We call the measures in the first class \emph{sequential} and those in the
second class \emph{residual}.  We define the supremum of an empty class to be
\(-\infty\).  Unique representation makes
\(\Xcon(\G)\) a Borel set, and both \(\Xcon(\G)\) and \(\Xres(\G)\) are
invariant.  Ergodic decomposition therefore gives
\[
 P_X(\varphi)=\max\{P_{\rm seq}(\varphi,\G),
                         P_{\rm res}(\varphi,\G)\}.
\]
We say that \(\varphi\) has \emph{full sequential pressure} when
\(P_X(\varphi)=P_{\rm seq}(\varphi,\G)\).  For \(\varphi=0\), write
\[
 \hcon(X,\G)=P_{\rm seq}(0,\G),
 \qquad
 \hres(X,\G)=P_{\rm res}(0,\G).
\]

Every invariant measure on \(X_m\) gives full mass to \(\Xcon(\G)\).  Thus the
pressure of a finite core is bounded above by sequential pressure.  Our first
result shows that these pressures increase to sequential pressure,
independently of the chosen enumeration:

\begin{theorem}[Finite cores exhaust sequential pressure]
\label{thm:sequential-exhaustion}
Suppose that \(\G\) uniquely represents \(\Xcon(\G)\).  Then, for every
\(\varphi\in C(X)\),
\[
 P_{\rm seq}(\varphi,\G)
 =\sup_{m\geq1}P_{X_m}(\varphi)
 =\lim_{m\to\infty}P_{X_m}(\varphi).
\]
Consequently,
\[
 P_X(\varphi)=\lim_{m\to\infty}P_{X_m}(\varphi)
 \quad\Longleftrightarrow\quad
 P_X(\varphi)=P_{\rm seq}(\varphi,\G),
\]
so the finite cores recover global pressure if and only if \(\varphi\) has full
sequential pressure.  In particular, the limit is independent of the
enumeration of \(\G\).
\end{theorem}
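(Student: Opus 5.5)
The plan is to prove the two inequalities separately, with monotonicity supplying the rest. Since \(F_m\subseteq F_{m+1}\), we have \(X_m\subseteq X_{m+1}\subseteq X\), so \(m\mapsto P_{X_m}(\varphi)\) is nondecreasing and its limit equals its supremum. As noted before the statement, every invariant measure on \(X_m\) gives full mass to \(\Xcon(\G)\), so \(P_{X_m}(\varphi)\le P_{\rm seq}(\varphi,\G)\) for all \(m\). The equivalence \(P_X=\lim P_{X_m}\iff P_X=P_{\rm seq}\) then follows immediately. Independence from the enumeration also follows, because \(P_{\rm seq}\) does not depend on it. The whole content is therefore the reverse inequality: given \(\eps>0\) and a sequential measure \(\mu\) with \(h_\mu+\int\varphi\,d\mu>P_{\rm seq}-\eps\), I will find \(m\) with \(P_{X_m}(\varphi)\ge h_\mu+\int\varphi\,d\mu-O(\eps)\). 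By ergodic decomposition, and since \(\Xcon\) is invariant, I may assume \(\mu\) is ergodic.

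\textbf{Step 1: induce on generator starts.} Unique representation gives a Borel set \(C\subseteq\Xcon(\G)\) of points whose origin is the first symbol of its generator. It also gives a Borel map recording the bi-infinite generator sequence. Since \(\mu(\Xcon)=1\) and \(\mu\) is ergodic, \(\mu(C)>0\). The first-return map to \(C\) is conjugate to the left shift on \(\G^{\mathbb Z}\), and the return time is the length \(|g_0|\). This yields an ergodic shift-invariant measure \(\nu\) on \(\G^{\mathbb Z}\). Abramov's and Kac's formulas give
\[
h_\nu=h_\mu/\mu(C),\qquad \int|g_0|\,d\nu=1/\mu(C)<\infty,\qquad \int\Phi\,d\nu=\int\varphi\,d\mu/\mu(C),
\]
where \(\Phi\) is the Birkhoff sum of \(\varphi\) over the first generator.

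\textbf{Step 2: a finite typical block code.} Fix \(k\) such that the variation of \(\varphi\) on \(k\)-cylinders is below \(\eps\), and fix a large \(N\). By Shannon--McMillan--Breiman and the ergodic theorem for \(\nu\), there is a set of \(N\)-blocks \(w=g_{i_1}\cdots g_{i_N}\) of total \(\nu\)-mass \(\ge 1/2\) on which the following hold:
\begin{itemize}
\item \(\nu[w]\le e^{-N(h_\nu-\eps)}\);
\item the total length satisfies \(|w|\le N(1/\mu(C)+\eps)\);
\item the \(\varphi\)-sum along \(w\), evaluated at a \(\mu\)-typical point, is at least \(N(\int\Phi\,d\nu-\eps)\).
\end{itemize}
This countable set contains a finite subfamily \(W\) with \(|W|\ge e^{N(h_\nu-2\eps)}\). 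Because \(W\) is finite, all generators occurring in it lie in some \(F_m\). Free concatenations of words in \(W\) lie in \(X_m\). Take the Bernoulli measure on \(W^{\mathbb Z}\) with uniform weights and suspend it by the length function to obtain an invariant measure \(\lambda\) on \(X_m\). Unique representation makes the coding injective, so Abramov gives
\[
h_\lambda\ge \frac{\log|W|}{\max_{w\in W}|w|}\approx \frac{h_\nu}{1/\mu(C)}=h_\mu .
\]

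\textbf{Step 3: the potential — the main obstacle.} Generators may be very long, and \(\varphi\) depends on context. The \(\varphi\)-sum of \(w\) inside an arbitrary concatenation can therefore differ from its sum at a \(\mu\)-typical point. I will handle this by uniform continuity. At any position of \(w\) whose distance to both ends of \(w\) is at least \(k\), the two contexts agree on a \(k\)-window, so the error there is at most \(\eps\). The remaining positions number at most \(2k\) per block, out of total length \(\approx N/\mu(C)\), and contribute at most \(2k\|\varphi\|_\infty\). Choosing \(N\gg k\) makes this negligible, giving
\[
\int\varphi\,d\lambda\ge\int\varphi\,d\mu-O(\eps).
\]
A lower bound on block lengths \(|w|\ge N(1/\mu(C)-\eps)\) is also imposed in typicality, to control the normalisation. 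Combining the three steps yields \(P_{X_m}(\varphi)\ge h_\mu+\int\varphi\,d\mu-O(\eps)\), which completes the proof. I expect the care needed in this step, together with making the Abramov/injectivity bookkeeping precise for the suspended Bernoulli measure, to be the delicate part.
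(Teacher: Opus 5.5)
Your proof is correct in outline, but at the key step it takes a different route from the paper. Both arguments induce on generator boundaries to get $\nu$ on $\G^{\mathbb Z}$ with finite mean roof.

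The paper then normalizes by the free energy $q$ of $\mu$. It puts $\psi=\widetilde\varphi-qr$, so that $h_\nu(\sigma_Y)+\int\psi\,d\nu=0$. It then invokes the countable-full-shift variational principle of Burr, Das, Wolf, and Yang, $\sup_mP_{Y_m}(\psi)=P_Y(\psi)\geq0$, to obtain a measure on $F_m^{\mathbb Z}$ whose normalized free energy exceeds $-\eps$. Lifting by \Cref{lem:finite-code-lift} divides that deficit by $R_m\geq1$, so no control of the roof mean of the approximant is needed. That theorem requires uniform continuity and bounded one-cylinder oscillation, so the paper treats locally constant $\varphi$ first and extends by the $1$-Lipschitz density argument.

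You instead prove the needed special case by hand. Shannon--McMillan--Breiman and the ergodic theorem produce a finite family $W$ of typical $N$-blocks, and two-sided length typicality replaces the normalization by $q$. Since the boundary error is $O(k\|\varphi\|_\infty)$ per block of length of order $N$, you handle every continuous $\varphi$ directly. Your route is self-contained and avoids countable-state thermodynamic formalism; the paper's is shorter and needs no typicality bookkeeping.

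Three details need tightening.

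First, Shannon--McMillan--Breiman for the countable time-zero partition of $\G^{\mathbb Z}$ requires that partition to have finite $\nu$-entropy. This holds because $\int|y_0|\,d\nu<\infty$ and there are at most $|\A|^\ell$ generators of length $\ell$.

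Second, and more importantly, unique representation does not make the coding of the $W$-suspension injective. It determines the generator sequence and the position of the origin, but not the phase modulo $N$ at which the generators are grouped into blocks of $W$. For instance, if $W=F^N$ for a finite $F\subset\G$, every image point has exactly $N$ preimages. The coding is always at most $N$-to-one, so entropy is nevertheless preserved. Alternatively, average the Bernoulli measure over $N$ generator shifts to get a $\sigma_{F_m}$-invariant measure on $F_m^{\mathbb Z}$ with entropy $\frac1N\log|W|$. Then lift it by \Cref{lem:finite-code-lift}, where injectivity does hold.

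Third, the boundary error is $4k\|\varphi\|_\infty$ rather than $2k\|\varphi\|_\infty$, which is harmless.
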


The proof follows the inducing argument of Burr, Das, Wolf, and Yang.  They
induce on generator boundaries, approximate the pressure of the resulting
countable full shift by finite-alphabet full shifts, and lift the
finite-alphabet measures back to finite cores
\cite[Theorem~27 and Proposition~28]{BDWYpressure}.  Their application begins
with a locally constant potential having an equilibrium state supported on the
concatenation set.  We apply the same argument to an arbitrary sequential
measure, normalize by its free energy, and then take the supremum over all
sequential measures.  This identifies the finite-core limit for every
continuous potential as sequential pressure.

Following \cite{BDWYpressure}, let \(\operatorname{FSP}(X,\G)\) be the class of
potentials with full sequential pressure, let
\(\operatorname{SSP}(X,\G)\) be the class for which
\(P_{\rm seq}>P_{\rm res}\), and let
\(\operatorname{FSSP}(X,\G)\) be the uniform closure of
\(\operatorname{SSP}(X,\G)\).  Burr, Das, Wolf, and Yang prove
\(\operatorname{FSSP}\subseteq\operatorname{FSP}\) and conjecture the reverse
inclusion \cite[Theorem~29]{BDWYpressure}.  Our second result disproves this
conjecture, even for a presentation of the full shift.

\begin{theorem}[FSP can be strictly larger than FSSP]
\label{thm:dyck-separation}
There is a uniquely represented coded presentation \((X,\G_D)\), with
\(X=\{+,-\}^{\mathbb Z}\), such that
\[
 P_{\rm res}(\varphi,\G_D)=P_X(\varphi)
 \qquad\text{for every }\varphi\in C(X),
\]
whereas
\[
 P_{\rm seq}(0,\G_D)=\htop(X)=\log 2.
\]
Consequently,
\[
 \operatorname{SSP}(X,\G_D)
 =\operatorname{FSSP}(X,\G_D)=\varnothing,
 \qquad
 0\in\operatorname{FSP}(X,\G_D),
\]
and hence \(\operatorname{FSP}(X,\G_D)\neq
\operatorname{FSSP}(X,\G_D)\).
\end{theorem}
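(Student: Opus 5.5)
The plan is to take \(\G_D\) to be the positive prime Dyck words and to read membership in \(\Xcon(\G_D)\) off the height walk of a point. Concretely, let \(\G_D\) be the set of words \(w=w_1\cdots w_{2n}\) over \(\{+,-\}\) (with \(+\mapsto 1\), \(-\mapsto -1\)) whose partial sums are \(>0\) at every proper prefix and equal \(0\) at the end. For \(x\in\{+,-\}^{\mathbb Z}\) let \(S_n(x)\) be its height walk, normalised by \(S_0=0\). The key observations, in order, are as follows.

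\textbf{Step 1.} I expect
\[
\Xcon(\G_D)=\{x:\ \inf_n S_n(x)>-\infty \text{ and the infimum is attained at infinitely many } n\to\pm\infty\}.
\]
The generator boundaries of a decomposition are exactly the times at which \(S\) equals its minimum. This gives unique representation: the level is forced to be the global minimum, and the blocks are the excursions above it.

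\textbf{Step 2.} Every finite word is a subword of some Dyck word, so \(X=\overline{\Xcon}=\{+,-\}^{\mathbb Z}\). Hence \(P_X(0)=\log 2\).

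\textbf{Step 3.} I will show \(P_{\rm seq}(0,\G_D)=\log 2\) through the easy inequality recorded before Theorem~\ref{thm:sequential-exhaustion}, namely \(P_{\rm seq}\ge \htop(X_m)\) for every \(m\). If \(F_m\) contains all prime Dyck words of length \(\le 2N\), then \(\htop(X_m)\) is \(\log(1/r_N)\), where \(r_N\) solves \(\sum_{g\in F_m} r^{|g|}=1\). The number of prime Dyck words of length \(2k\) is a Catalan number \(C_{k-1}\sim 4^{k}/(c\,k^{3/2})\). Since the full series \(\sum_k C_{k-1}4^{-k}\) equals \(1/2\cdot(\text{Catalan generating function at }1/4)=1/2\)... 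I will instead compare with \(r=\tfrac12-\delta\): for each \(\delta>0\) the full series at \(\tfrac12-\delta\) diverges exponentially, so a finite truncation exceeds \(1\). This gives \(r_N<\tfrac12-\delta\)'s reciprocal bound eventually, and hence \(\htop(X_m)\to\log2\). A cleaner alternative is to exhibit, for each \(N\), the Bernoulli-like measures on excursions of length \(\le 2N\) directly.

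\textbf{Step 4, the main step.} I will show \(P_{\rm res}(\varphi,\G_D)\ge P_X(\varphi)\) for every \(\varphi\in C(X)\); since \(P_X=\max\{P_{\rm seq},P_{\rm res}\}\), equality follows. The mechanism is drift. If \(\nu\) is ergodic with \(\int x_0\,d\nu\neq 0\), then by Birkhoff \(S_n\to-\infty\) along one of the directions \(n\to\pm\infty\) for \(\nu\)-a.e.\ point. So the infimum is \(-\infty\), and \(\nu\) is residual by Step~1.

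It therefore suffices to show that for every ergodic \(\mu\) and every \(\eps>0\) there is an ergodic \(\nu\) with \(\int x_0\,d\nu\neq0\), with \(\nu\) weak\(^*\)-close enough to \(\mu\) that \(|\int\varphi\,d\nu-\int\varphi\,d\mu|<\eps\), and with \(h_\nu>h_\mu-\eps\). My first attempt is the standard specification/entropy-density construction on the full shift:
\begin{itemize}[label={}]
\item Take many \(\mu\)-generic blocks of length \(n\), with \(e^{n(h_\mu-\eps)}\) of them.
\item Concatenate them freely, inserting after each block a fixed word \(+^{k}\) with \(k\ll n\) but \(k\ge 1\).
\item Take the induced ergodic (Bernoulli-over-blocks, then averaged over shifts) measure \(\nu\).
\end{itemize}
The inserted symbols change the empirical frequency of \(+\) by at least \(k/(n+k)\) relative to the blocks. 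If \(\int x_0\,d\mu\) happens to equal exactly the value that cancels this change, I can instead insert \(-^{k}\). Either way \(\nu\) has nonzero drift, while entropy and \(\int\varphi\) move by \(O(k/n)+\eps\).

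\textbf{Step 5.} The consequences are then formal. Because \(P_{\rm seq}\le P_X=P_{\rm res}\) for every \(\varphi\), the strict inequality \(P_{\rm seq}>P_{\rm res}\) never holds. So \(\operatorname{SSP}=\varnothing\), its uniform closure \(\operatorname{FSSP}\) is empty, and \(0\in\operatorname{FSP}\) because \(P_{\rm seq}(0)=\log2=P_X(0)\).

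I expect the obstacle to be the density argument in Step~4. It needs to produce genuinely ergodic approximants with controlled drift and free energy. I also expect care to be needed in Step~1 regarding the origin-position part of unique representation.
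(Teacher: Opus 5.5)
Your argument is correct, and Steps 1, 2 and 5 match the paper: generator boundaries are the level set of the minimum of the height walk, and $+^M u\,-^{M+q_{|u|}}$ embeds every word in a generator. The real difference is in the residual step.

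\textbf{The residual step.} The paper does not build block-concatenation approximants. It takes an ergodic equilibrium state $\mu$, which exists because entropy is upper semicontinuous on the full shift. If $a_\mu=0$, it passes $\mu$ through a memoryless channel that flips each $-$ to $+$ independently with probability $\delta$. The output $\nu_\delta$ has the following properties:
\begin{itemize}
\item it is ergodic, as a factor of $\mu$ times a Bernoulli measure;
\item its drift is exactly $\delta$;
\item it converges weak$^*$ to $\mu$ under the obvious coupling;
\item it loses at most $H_2(\delta/2)$ of entropy, by the bound $H(Z_0^{N-1})\le H(Y_0^{N-1})+H(E_0^{N-1})$.
\end{itemize}

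Your route also works. You use Shannon--McMillan--Breiman and Birkhoff to select about $e^{n(h_\mu-\eps)}$ generic $n$-blocks, insert spacers $+^k$ or $-^k$ to force nonzero drift, and get ergodicity by shift-averaging a $\sigma^{n+k}$-ergodic Bernoulli-over-blocks measure. Phrase the dichotomy in terms of the average drift $\bar a$ of the chosen blocks rather than $\int\chi(x_0)\,d\mu$. The two spacer choices give drifts $(n\bar a\pm k)/(n+k)$, which cannot both vanish.

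The trade-off is this. Your construction is the standard entropy-density template and needs no equilibrium state. The paper's perturbation avoids the generic-block bookkeeping, computes the drift exactly, and needs only a one-line entropy estimate.

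\textbf{The sequential entropy.} The paper uses your ``cleaner alternative'': uniform Bernoulli measures on $D_n^{\mathbb Z}$ with constant roof $2n$. This gives $\log C_{n-1}/(2n)\to\log 2$ with no root-finding.

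One slip in Step 3 as written needs fixing. The series $\sum_k C_{k-1}r^{2k}$ converges for $r<\tfrac12$; its value at $r=\tfrac12$ is $\tfrac12$. It diverges exponentially at $r=\tfrac12+\delta$, not at $\tfrac12-\delta$. The correct argument is:
\begin{itemize}
\item the truncation at $r=\tfrac12+\delta$ eventually exceeds $1$;
\item hence $\tfrac12<r_N<\tfrac12+\delta$;
\item hence $\htop(X_m)=\log(1/r_N)\to\log 2$.
\end{itemize}
This uses that prime Dyck words form a prefix code, so the root formula for the entropy applies.
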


The construction and proof are given in \Cref{sec:dyck-separation}.

\subsection{Computability and consequences}

A continuous potential \(\varphi:\A^{\mathbb Z}\to\mathbb R\) is
\emph{computable} if it can be approximated effectively and uniformly by
rational locally constant potentials.  A compact subset of a computable
metric space is \emph{recursively compact} if an algorithm halts exactly when
a given finite family of basic open balls covers it.  In the computable space
of probability measures, a recursively compact singleton is a computable
measure; equivalently here, its cylinder probabilities can be approximated
effectively.
We use the standard computable-metric-space framework; see
\cite[Section~2.1]{HedgesPavlovpressure} for a more detailed introduction in a
closely related symbolic setting, and \cite{HoyrupRojas} for the underlying
computable measure-theoretic framework.

Burr, Das, Wolf, and Yang obtain decreasing upper bounds for global pressure
from the language oracle.  The computable finite-core pressures give
increasing lower bounds.  By \Cref{thm:sequential-exhaustion}, the lower bounds
converge to global pressure whenever the potential has full sequential
pressure.  We therefore obtain:

\begin{corollary}[Computability under full sequential pressure]
\label{cor:pressure-computability}
Let \(X=X(\G)\) be a coded shift over a finite alphabet.  Suppose that \(\G\)
uniquely represents \(\Xcon(\G)\), and that a generator oracle and a
length-ordered language oracle are given.  If
\(\varphi:\A^{\mathbb Z}\to\mathbb R\) is computable and continuous and
\[
 P_X(\varphi)=P_{\rm seq}(\varphi,\G),
\]
then \(P_X(\varphi)\) is computable uniformly from the two oracles and a
computable name for \(\varphi\).  The set of equilibrium states is recursively
compact relative to the same data.  In particular, if \(\varphi\) has a unique
equilibrium state, then it is computable from these data.
\end{corollary}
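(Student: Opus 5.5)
The plan is to squeeze \(P_X(\varphi)\) between computable lower and upper approximations. We then pass from pressure to equilibrium states by upper semicontinuity of entropy on a subshift.

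\textbf{Lower bounds.} Given \(m\), the finite core \(X_m=X(F_m)\) is a sofic shift. A presentation of it, namely the labelled graph obtained as a flower of the words in \(F_m\), is computable from the first \(m\) generators. For a rational locally constant potential \(\psi\) depending on a window of width \(k\), \(P_{X_m}(\psi)\) is computable. One passes to a higher block presentation of the flower graph and computes the logarithm of the spectral radius of a weighted adjacency matrix with entries \(e^{\psi}\). Here we use that pressure on a sofic shift equals pressure on its right-resolving (or any finite-to-one) graph presentation, because finite-to-one factors preserve entropy of measures. The flower graph may not be right-resolving, so I would first apply the computable subset construction to get a right-resolving presentation. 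Since \(\varphi\) is computable, we get rational \(\psi_j\) with \(\|\varphi-\psi_j\|_\infty<2^{-j}\). Then \(P_{X_m}(\psi_j)-2^{-j}\) is a computable lower bound for \(P_{X_m}(\varphi)\). By \Cref{thm:sequential-exhaustion} and the hypothesis \(P_X(\varphi)=P_{\rm seq}(\varphi,\G)\), the supremum over \(m,j\) of these lower bounds is exactly \(P_X(\varphi)\). So \(P_X(\varphi)\) is lower semicomputable.

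\textbf{Upper bounds.} Using the length-ordered language oracle we can read off \(\Lcal_n(X)\) exactly. The reason is that once a word of length \(n+1\) appears, all words of length \(n\) have been listed. Then
\[
 Z_n(\psi_j)=\sum_{w\in\Lcal_n(X)}\exp\Big(\max_{x\in[w]}S_n\psi_j(x)\Big),
\]
with \(n\ge\) the window width, is computable up to boundary terms. By subadditivity, \(\tfrac1n\log Z_n+2^{-j}\) is an upper bound for \(P_X(\varphi)\), and these bounds converge as \(n\to\infty\), \(j\to\infty\). So \(P_X(\varphi)\) is upper semicomputable, hence computable. This is the step credited to Burr--Das--Wolf--Yang. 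Everything here is uniform in the oracles and the name of \(\varphi\).

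\textbf{Equilibrium states.} This is the main obstacle. The language oracle makes \(\M_\sigma(X)\) recursively compact. A measure \(\mu\) on the full shift lies outside it iff, for some \(n\), some cylinder of a word not in \(\Lcal_n(X)\) has positive mass, or \(\mu\) fails invariance; both conditions are semidecidable. The set of equilibrium states is
\[
 \{\mu\in\M_\sigma(X): h_\mu+\textstyle\int\varphi\,d\mu\ge P_X(\varphi)\}.
\]
To show it is recursively compact, I would write \(h_\mu=\inf_n \tfrac1n H_\mu(\xi_n)\), where \(\xi_n\) is the partition into \(n\)-cylinders. Each \(\mu\mapsto \tfrac1n H_\mu(\xi_n)+\int\varphi\,d\mu\) is computable, uniformly in \(n\). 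So the complement of the equilibrium set within \(\M_\sigma(X)\) is the effectively open set
\[
 \bigcup_n\Big\{\mu:\tfrac1nH_\mu(\xi_n)+\textstyle\int\varphi\,d\mu<P_X(\varphi)\Big\},
\]
using computability of \(P_X(\varphi)\) from the previous steps. A recursively compact set minus an effectively open set is recursively compact. Finally, a recursively compact singleton is a computable point, so a unique equilibrium state is computable. The delicate points are the uniformity of the sofic pressure computation and the effective-openness bookkeeping.
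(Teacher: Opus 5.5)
Your proposal is correct and follows the paper's route. You bracket the pressure between two computable sequences: increasing lower bounds from the finite-core pressures, which converge to \(P_X(\varphi)\) by \Cref{thm:sequential-exhaustion} and the full-sequential-pressure hypothesis, and decreasing upper bounds from the length-ordered language oracle, as in Burr--Das--Wolf--Yang. You then obtain the equilibrium set as \(\M_\sigma(X)\) minus the effectively open union of the strict sublevel sets \(\{F_n^\varphi<P_X(\varphi)\}\), exactly as in \Cref{thm:equilibrium}.

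The differences are cosmetic. You re-derive the sofic-pressure and upper-bound computations that the paper cites, where unique representation already makes the flower presentation of \(X_m\) injective, so the subset construction is unnecessary. You also use full computability of the pressure, although lower semicomputability suffices for the equilibrium-state step.
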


Taking \(\varphi=0\) gives the following immediate consequence.
\begin{corollary}[Entropy and measures of maximal entropy]
\label{cor:main-coded}
Let \(X=X(\G)\) be a coded shift over a finite alphabet.  Suppose that \(\G\)
uniquely represents \(\Xcon(\G)\), a generator oracle and a length-ordered
language oracle are given, and
\[
  \hcon(X,\G)\geq\hres(X,\G).
\]
Then \(\htop(X)\) is computable uniformly from these oracles, and the set of
measures of maximal entropy is recursively compact relative to them.  If \(X\)
has a unique measure of maximal entropy, then that measure is computable from
these oracles.
\end{corollary}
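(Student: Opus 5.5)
This follows by specializing \Cref{cor:pressure-computability} to \(\varphi=0\). The zero potential is computable: it is the constant rational locally constant potential \(0\), and a computable name for it is trivial to produce uniformly. For \(\varphi=0\) we have \(P_X(0)=\htop(X)\) by the variational principle, and \(P_{\rm seq}(0,\G)=\hcon(X,\G)\) and \(P_{\rm res}(0,\G)=\hres(X,\G)\) by definition. The ergodic decomposition identity \(P_X(\varphi)=\max\{P_{\rm seq}(\varphi,\G),P_{\rm res}(\varphi,\G)\}\) then gives \(\htop(X)=\max\{\hcon,\hres\}\). Hence the hypothesis \(\hcon(X,\G)\geq\hres(X,\G)\) is equivalent to \(P_X(0)=P_{\rm seq}(0,\G)\), which says that \(0\) has full sequential pressure.

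With these identifications, each conclusion of \Cref{cor:pressure-computability} translates directly.
\begin{enumerate}
\item Computability of \(P_X(0)\) uniformly from the two oracles gives computability of \(\htop(X)\).
\item Equilibrium states for \(0\) are exactly the invariant measures maximizing \(h_\mu(\sigma)\), that is, the measures of maximal entropy. So recursive compactness of the set of equilibrium states gives recursive compactness of the set of measures of maximal entropy.
\item If the measure of maximal entropy is unique, it is the unique equilibrium state for \(0\), and the last clause of \Cref{cor:pressure-computability} makes it computable.
\end{enumerate}
Uniformity holds because the name for \(0\) is fixed and requires no extra data.

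No step is a real obstacle. The only point to check is the equivalence of the entropy hypothesis with full sequential pressure. This relies on unique representation, which makes \(\Xcon(\G)\) Borel and invariant, so that the max formula for \(P_X\) is available. That formula was already established before \Cref{thm:sequential-exhaustion}.
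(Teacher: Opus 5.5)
Your proposal is correct and matches the paper's argument, which obtains this corollary by specializing \Cref{cor:pressure-computability} to \(\varphi=0\). As you note, \(\htop(X)=\max\{\hcon(X,\G),\hres(X,\G)\}\) turns the hypothesis \(\hcon(X,\G)\geq\hres(X,\G)\) into full sequential pressure for the zero potential, and the equilibrium states of \(0\) are exactly the measures of maximal entropy.
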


In the equality examples below, the unique maximizing measure is residual and
the sequential supremum is not attained.  Even so, the generator and language
oracles compute that measure.  Thus finite-word data from the coded
presentation can compute a measure which gives full mass to the residual set.

Under the strict inequality, \Cref{cor:main-coded} also removes the additional
computability hypothesis on the Vere--Jones parameter used by
Kucherenko, L\'opez, and Wolf
\cite[Corollary~1.1]{KLWcomputability}; at equality
it applies whenever the measure of maximal entropy is unique.  The opposite
pressure regime can behave differently: Kucherenko, L\'opez, and Wolf construct
a uniquely represented coded shift \cite[Theorem~D]{KLWcomputability} with
\(\hres(X,\G)>\hcon(X,\G)\) whose unique measure of maximal entropy is not
computable from the supplied generator and language oracles.

The remainder of this paper is organized as follows.
\Cref{sec:preliminaries} contains the symbolic and computability preliminaries.
In \Cref{sec:sequential-boundary} we prove the finite-core pressure identity
and construct the presentation which separates \(\operatorname{FSP}\) from
\(\operatorname{FSSP}\).  Finally, \Cref{sec:effective-consequences} proves the
computability results and gives examples at equality.

\section{Preliminaries}
\label{sec:preliminaries}

\subsection{Coded shifts and symbolic notation}

Fix a finite alphabet \(\A=\{0,\ldots,d-1\}\).  The full two-sided shift is
\(\Sigma=\Sigmafull\) with the left shift \(\sigma\).  We use the metric
\[
 d_\Sigma(x,y)=
 \begin{cases}
  0,&x=y,\\
  2^{-\min\{|j|:x_j\neq y_j\}},&x\neq y.
 \end{cases}
 \tag{2.1}\label{eq:shiftmetric}
\]
A subshift \(X\subset\Sigma\) is a nonempty closed, shift-invariant set.  Its
language is
\[
 \Lcal(X)=\bigcup_{n\geq1}\Lcal_n(X),\qquad
 \Lcal_n(X)=\{x_{[0,n-1]}:x\in X\}.
\]
For \(w\in\A^n\), write
\[
 [w]=\{x\in\Sigma:x_{[0,n-1]}=w\}
\]
for the ambient cylinder; its trace on \(X\) is \(X\cap[w]\).  Let
\(\M_\sigma(X)\) denote the invariant Borel probability measures on \(X\), and
let \(h_\mu(\sigma)\) denote the measure-theoretic (Kolmogorov--Sinai) entropy
of \(\mu\).  We write
\[
 \htop(X)=\sup_{\mu\in\M_\sigma(X)}h_\mu(\sigma).
\]
A measure attaining this supremum is a measure of maximal entropy (MME).

Let \(\G\) be a nonempty countable collection of nonempty finite words over
\(\A\).  The concatenation set is
\[
 \Xcon(\G)=\{\cdots g_{-1}g_0g_1\cdots:g_j\in\G\},
\]
the coded shift is \(X(\G)=\overline{\Xcon(\G)}\), and
\(\Xres(\G)=X(\G)\setminus\Xcon(\G)\).

\begin{definition}[Unique representation]
\label{def:unique-representation}
A \emph{pointed \(\G\)-representation} of \(x\in\Xcon(\G)\) is a pair
\[
 ((g_j)_{j\in\mathbb Z},k),
 \qquad g_j\in\G,\quad 0\leq k<|g_0|,
\]
such that
\[
 x=\sigma^k(\cdots g_{-1}.g_0g_1\cdots),
\]
where the dot marks coordinate zero.  We say that \(\G\) \emph{uniquely
represents} \(\Xcon(\G)\) if every point of \(\Xcon(\G)\) has exactly one
pointed \(\G\)-representation.  Equivalently, \(\G\) is a strong code in the
terminology of \cite{BPRunambiguous}.
\end{definition}

Every coded shift admits some unambiguous code
\cite[Corollary~35]{BPRunambiguous}, but our hypotheses concern the code supplied
by the generator oracle.  In particular, we do not assume that there is an algorithm
which replaces the supplied presentation by a uniquely represented one with
the same oracle information.  When the representation is unique, let
\[
 E_{\G}=\{x\in\Xcon(\G):k=0\}
\]
be the set of points whose distinguished coordinate is a generator boundary.

For a uniquely represented presentation \(X=X(\G)\), following
\cite{KSWergodic}, define
\begin{align*}
 \hcon(X,\G)
 &=
 \sup\{h_\mu(\sigma):\mu\in\M_\sigma(X),\ \mu(\Xcon(\G))=1\},\\
 \hres(X,\G)
 &=
 \sup\{h_\mu(\sigma):\mu\in\M_\sigma(X),\ \mu(\Xres(\G))=1\}.
\end{align*}
By \Cref{lem:boundary-coding}, \(\Xcon(\G)\) and \(\Xres(\G)\) are invariant
Borel sets.  If \(\xi_\mu\) is the ergodic decomposition of \(\mu\), then
\[
 h_\mu(\sigma)=\int h_\nu(\sigma)\,d\xi_\mu(\nu),
\]
while ergodicity gives \(\nu(\Xcon(\G))\in\{0,1\}\).  It follows that
\[
 \htop(X)=\max\{\hcon(X,\G),\hres(X,\G)\}.
\]
Under the strict inequality \(\hcon(X,\G)>\hres(X,\G)\),
\cite[Theorem~B]{KSWergodic} also gives uniqueness of the MME.

\subsection{Effective presentation}

\begin{definition}[Presentation oracles]
\label{def:oracles}
An oracle for \(\Lcal(X)\) enumerates every word in \(\Lcal(X)\) in
nondecreasing order of length.  A generator oracle enumerates every generator
in an arbitrary order; a finite enumeration ends with a designated end marker.
\end{definition}

The language enumeration is ordered by length so that each finite language
level can be computed.  The generator enumeration may be arbitrary because
the proofs use only finite prefixes of that enumeration.

\begin{lemma}[Finite language levels are decidable]
\label{lem:language-decidable}
From the language oracle, one can compute the complete finite set
\(\Lcal_n(X)\), uniformly in \(n\).
\end{lemma}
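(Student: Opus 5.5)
The plan is to read the oracle until the first word of length \(n+1\) appears, and then output everything of length \(n\) seen so far. Since the oracle enumerates \(\Lcal(X)\) in nondecreasing order of length, once a word of length \(>n\) has been output, no word of length \(n\) can appear later. So the list collected up to that point is exactly \(\Lcal_n(X)\).

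The one issue is termination: we must know that some word of length greater than \(n\) is eventually output. Here I would use that \(X\) is a nonempty subshift of \(\A^{\mathbb Z}\). Hence it contains a bi-infinite point \(x\), and \(x_{[0,n]}\in\Lcal_{n+1}(X)\), so \(\Lcal_{n+1}(X)\neq\varnothing\). Because the oracle enumerates every word of \(\Lcal(X)\), it must eventually output a word of length at least \(n+1\). At that stage the search halts.

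Uniformity in \(n\) is immediate, since the procedure is the same for every \(n\) and uses only the oracle. Finiteness of the output is automatic, because \(\Lcal_n(X)\subseteq\A^n\) is finite; we may also remove repetitions if the oracle lists a word more than once.

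The only point needing care is the nonemptiness argument that guarantees halting. Everything else follows directly from the ordering convention in \Cref{def:oracles}.
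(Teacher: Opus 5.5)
Your proposal is correct and is essentially the paper's own proof: run the length-ordered enumeration until the first word longer than \(n\) appears and output the distinct length-\(n\) words seen so far. Termination comes from nonemptiness of \(X\), which gives words of every length, and correctness comes from the nondecreasing-length convention.
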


\begin{proof}
Run the enumeration until the first word of length strictly greater than \(n\)
appears.  Because \(X\) is nonempty, its language contains words of every
length, so this stage is reached.  Nondecreasing length implies that no further
word of length \(n\) can appear.  The distinct length-\(n\) words already seen
are exactly \(\Lcal_n(X)\).
\end{proof}

\begin{remark}
The ordered language enumeration supplies both positive and negative
information at each finite length.  This is stronger than an unordered
positive enumeration.
\end{remark}

\begin{lemma}[Effective presentation]
\label{lem:effective-presentation}
The language oracle uniformly computes a recursively compact name for \(X\),
a dense sequence of computable points in \(X\), and a recursively compact name
for \(\M_\sigma(X)\) in the standard computable space of Borel probability
measures on \(\Sigma\).
\end{lemma}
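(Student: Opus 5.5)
We will build all three objects from the finite language levels supplied by \Cref{lem:language-decidable}, using the standard description of recursively compact sets: a compact set \(K\) is recursively compact when one can semi-decide that a finite union of basic open balls covers \(K\). The approach is the same in each case. We write the object as the intersection of a decreasing sequence of finite unions of cylinders, each computable from a finite language level. We then use compactness to turn a covering question into a finite check at some level.

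\textbf{Step 1: \(X\) is recursively compact.} For each \(n\), let
\[
X^{(n)}=\bigcup\{[x_{-n}\cdots x_n]:x_{[-n,n]}\in\Lcal_{2n+1}(X)\},
\]
read as ambient cylinders centred at the origin. These are clopen sets computable from \(\Lcal_{2n+1}(X)\). They decrease, since every word of \(X\) has a subword of length \(2n-1\) in \(X\). Their intersection is \(X\), because \(X\) is closed.

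Given a finite family of basic balls, which are cylinders in the metric \eqref{eq:shiftmetric}, the algorithm searches for an \(n\) such that every admissible central word of length \(2n+1\) has its cylinder inside some ball of the family. This is decidable for each \(n\) once \(n\) exceeds the radii involved. If the balls cover \(X\), then by compactness and the nested intersection some \(X^{(n)}\) is already covered, so the search halts. If the search halts, the balls cover \(X\).

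\textbf{Step 2: a dense sequence of computable points.} For each word \(w\in\Lcal_{2n+1}(X)\), we produce a point of \(X\cap[w]\) shifted to the centre, by a compactness-driven extension. We cannot search for arbitrary extensions, because only extendability inside \(\Lcal(X)\) is decidable level by level, and the concern is an extension that is admissible but dies later. Instead we use the fact that every word of \(\Lcal(X)\) extends bi-infinitely within \(X\).

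Given \(w\) at level \(2n+1\), we choose at stage \(k\) the lexicographically least word \(u\in\Lcal_{2n+2k+1}(X)\) whose central subword is the previous choice. Such a \(u\) exists because the previous choice lies in \(\Lcal(X)\) and \(X\) is a subshift, and any two-sided extension of it in \(X\) supplies one. The resulting limit point lies in \(X\), since all of its central words lie in \(\Lcal(X)\) and \(X\) is closed. Each coordinate is computed after finitely many stages, so the point is computable, uniformly in \(w\). Enumerating all \(w\) over all \(n\) gives a sequence that is dense in \(X\).

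\textbf{Step 3: \(\M_\sigma(X)\) is recursively compact.} The set \(\M_\sigma(X)\) consists of the probability measures \(\mu\) on \(\Sigma\) satisfying two families of conditions:
\begin{itemize}
\item \(\mu([u])=0\) for every \(u\notin\Lcal(X)\);
\item \(\mu([u])=\sum_a\mu([au])\) for every word \(u\).
\end{itemize}
These are the cylinder form of support in \(X\) and of shift-invariance, using one-sided cylinders \([u]\) at coordinates \(0,\ldots,|u|-1\).

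The space of probability measures on \(\Sigma\) is recursively compact. Each condition above defines an effectively closed set: its complement is the set where a finite quantity involving cylinder values, which are continuous because cylinders are clopen, is nonzero, and this is semi-decidable from names. The first family is enumerable because the complement of \(\Lcal_n(X)\) is computable by \(\Lcal_n\)-decidability. An effectively closed subset of a recursively compact set is recursively compact, uniformly in the data, and this yields the name for \(\M_\sigma(X)\).

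\textbf{Main obstacle.} The main obstacle is Step 3, specifically keeping the construction inside the standard computable measure space. The needed facts there are that cylinder evaluation \(\mu\mapsto\mu([u])\) is computable, which holds because cylinders are clopen, and that intersecting with effectively closed sets preserves recursive compactness. Both are standard results in \cite{HoyrupRojas}, and I would cite them rather than reprove them.
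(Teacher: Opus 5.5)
Your Steps 1 and 2 are correct. Step 1 is a clopen-approximation version of the paper's argument; the paper instead writes \(\Sigma\setminus X\) as a union of forbidden cylinders and uses recursive compactness of \(\Sigma\). Step 2 is the paper's one-symbol-on-each-side extension.

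The gap is in Step 3: your two families of conditions do not characterize \(\M_\sigma(X)\).
\begin{itemize}
\item Both families involve only the cylinders \([u]\) at coordinates \(0,\ldots,|u|-1\) and the sets \(\bigcup_a[au]=\{x:x_{[1,|u|]}=u\}\). All of these lie in the \(\sigma\)-algebra generated by the nonnegative coordinates.
\item The ambient space is all Borel probability measures on the two-sided shift \(\Sigma=\A^{\mathbb Z}\). So your conditions only say that the one-sided process \((x_j)_{j\geq0}\) is stationary and sees only allowed words. They say nothing about negative coordinates.
\item Concretely, let \(y_j=1\) for \(j<0\), \(y_j=0\) for \(j\geq0\), and \(\mu=\delta_y\). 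Then \(\mu([u])\) and \(\sum_a\mu([au])\) both equal \(1\) if \(u=0^{|u|}\) and \(0\) otherwise, so your invariance family holds.
\item For \(X=\{0^\infty\}\), or for the full shift, your support family holds too. Yet \(\delta_y\) is not shift-invariant, and in the first case it gives \(X\) measure zero.
\end{itemize}
So the effectively closed set you cut out strictly contains \(\M_\sigma(X)\), and you obtain a recursively compact name for the wrong set. Your characterization is the correct one for one-sided shifts, but not here. The difficulty is therefore not the computability of cylinder evaluation, which you flagged as the main obstacle, but the correctness of the characterization itself.

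The repair is local. Impose invariance at every position: \(\mu(\sigma^{-j}[u])=\mu(\sigma^{-j-1}[u])\) for all \(j\in\mathbb Z\) and all words \(u\). Each \(\sigma^{-j}[u]\) is a decidable clopen set, so this is still a uniformly enumerable family of effectively closed conditions. Then \(\mu\) and \(\sigma_*\mu\) agree on every cylinder, hence on the algebra the cylinders generate, so \(\sigma_*\mu=\mu\). Invariance then transports your coordinate-\(0\) support condition to every position, giving \(\mu(X)=1\).

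Alternatively, follow the paper. By Step 1, the measures supported on the recursively compact set \(X\) form a recursively compact set. Computability of the push-forward \(\mu\mapsto\sigma_*\mu\) makes \(\{\mu:d(\sigma_*\mu,\mu)=0\}\) effectively closed, and intersecting the two gives \(\M_\sigma(X)\).

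Once repaired, your route is slightly more elementary than the paper's. It needs only the computability of \(\mu\mapsto\mu(C)\) for clopen \(C\), rather than computability of the push-forward or the result on measures supported on recursively compact sets.
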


\begin{proof}
By \Cref{lem:language-decidable}, the forbidden words are effectively
enumerable, and
\[
 \Sigma\setminus X
 =
 \bigcup_{\substack{w\notin\Lcal(X)\\j\in\mathbb Z}}
 \{x:x_{[j,j+|w|-1]}=w\}.
\]
The cylinders on the right are uniformly enumerable clopen sets, each an
effectively computable finite union of basic open balls.  Since \(\Sigma\) is
recursively compact, a finite family \(U\) of basic open sets
covers \(X\) exactly when, at some finite stage, \(U\) together with finitely
many of these forbidden cylinders covers \(\Sigma\).  This gives the required
cover-semidecision procedure for \(X\).

We next construct a computable point of \(X\) in every nonempty cylinder.  Fix
\(w\in\Lcal(X)\) and set \(v_0=w\).
Given \(v_r\), use \Cref{lem:language-decidable} to find
\(a,b\in\A\) for which \(av_rb\in\Lcal(X)\), and set
\(v_{r+1}=av_rb\).  Such an extension exists because \(v_r\) occurs in a
bi-infinite point of \(X\).  Place \(v_r\) on the coordinates
\([-r,|w|+r-1]\).  The resulting cylinders have nonempty, nested intersections
with \(X\), and their intersection is a single point \(x^{(w)}\).  To compute
\(x^{(w)}_j\), construct \(v_r\) until its coordinate interval contains \(j\).
Thus \(x^{(w)}\) is computable, and
\[
 \{\sigma^j x^{(w)}:w\in\Lcal(X),\ j\in\mathbb Z\}
\]
is a countable dense family in \(X\).

The probability measures supported on a recursively compact set form a
recursively compact set
\cite[Lemma~2.5.1 and Proposition~2.4.4]{GHRinvariant}.  The shift
push-forward on measures is computable
\cite[Theorem~3.1.1]{GHRinvariant}.  In any fixed computable metric \(d\) on
measures, the condition
\[
 \sigma_*\mu=\mu
 \quad\Longleftrightarrow\quad
 d(\sigma_*\mu,\mu)=0
\]
defines an effectively closed subset.  It follows that
\(\M_\sigma(X)\) is recursively compact; see also
\cite[Proposition~3.30]{BHLSthermo}.
\end{proof}

A real number is \emph{lower semicomputable} if it is the limit of a computable
nondecreasing sequence of rational numbers.  For \(n\geq1\) and a probability
measure \(\mu\) on \(\Sigma\), put
\[
 H_n(\mu)=-\sum_{w\in\A^n}\mu([w])\log\mu([w]),
 \qquad 0\log0:=0.
\]
The functions \(H_n\) are computable and continuous, uniformly in \(n\), and
for every invariant measure
\[
 h_\mu(\sigma)=\inf_{n\geq1}\frac1nH_n(\mu).
 \tag{2.2}\label{eq:blockformula}
\]

\begin{corollary}[Effective equilibrium-state sets]
\label{thm:equilibrium}
Let \(X\subset\A^{\mathbb Z}\) be a recursively compact subshift, let
\(\varphi:\A^{\mathbb Z}\to\mathbb R\) be computable and continuous, and
suppose that a lower semicomputable name for \(P_X(\varphi)\) is given.  Then
the set of equilibrium states of \(\varphi|_X\) is recursively compact,
uniformly from a recursively compact name for \(X\), a computable name for
\(\varphi\), and the given pressure name.  If the equilibrium state is unique,
it is computable from these names.
\end{corollary}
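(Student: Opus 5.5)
The plan is to write the equilibrium-state set as the intersection of the recursively compact set \(\M_\sigma(X)\) with an effectively closed set in the space of probability measures on \(\Sigma\). Intersecting a recursively compact set with an effectively closed set gives a recursively compact set, uniformly in the names. The final claim then follows because a recursively compact singleton is a computable point.

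\emph{Step 1.} The recursively compact name for \(X\) computes a recursively compact name for \(\M_\sigma(X)\). This is the measure-theoretic part of the proof of \Cref{lem:effective-presentation}: measures supported on a recursively compact set form a recursively compact set, and invariance is an effectively closed condition.

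\emph{Step 2.} Consider the free energy \(F(\mu)=h_\mu(\sigma)+\int\varphi\,d\mu\). The map \(\mu\mapsto\int\varphi\,d\mu\) is computable and continuous, uniformly from the name of \(\varphi\). By \eqref{eq:blockformula}, \(h_\mu(\sigma)=\inf_n \frac1n H_n(\mu)\), and each \(H_n\) is computable and continuous. Hence \(F\) is an infimum of a uniformly computable sequence of continuous functions \(F_n(\mu)=\frac1nH_n(\mu)+\int\varphi\,d\mu\), so \(F\) is upper semicomputable. On \(\M_\sigma(X)\) we always have \(F\le P:=P_X(\varphi)\), so
\[
 \mathrm{Eq}=\{\mu\in\M_\sigma(X): F(\mu)\ge P\}
 =\bigcap_{n\ge1}\{\mu\in\M_\sigma(X): F_n(\mu)\ge P\}.
\]

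\emph{Step 3.} I will show that the complement of \(\mathrm{Eq}\) within \(\M_\sigma(X)\) is effectively open. The complement is \(\bigcup_n\{F_n(\mu)<P\}\). The pressure name gives a nondecreasing rational sequence \(q_k\uparrow P\), so
\[
 \{F_n<P\}=\bigcup_k\{F_n<q_k\}.
\]
Each set \(\{F_n<q_k\}\) is effectively open, uniformly in \(n\) and \(k\), because \(F_n\) is computable. Thus \(\mathrm{Eq}\) is effectively closed relative to \(\M_\sigma(X)\). To semidecide that a finite family \(U\) of balls covers \(\mathrm{Eq}\), I search for a finite stage at which \(U\), together with finitely many of the sets \(\{F_n<q_k\}\), covers \(\M_\sigma(X)\). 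That cover condition is semidecidable by recursive compactness, and compactness guarantees that such a stage exists whenever \(U\) covers \(\mathrm{Eq}\). This step is where the lower semicomputability of \(P\) is used essentially, and it is the delicate point. With only an upper name for \(P\), the sets \(\{F_n<P\}\) would not be effectively open.

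\emph{Step 4.} I must check that \(\mathrm{Eq}\) is nonempty, so that the recursively compact name is meaningful. Entropy is upper semicontinuous on subshifts, so \(F\) attains its supremum on the compact set \(\M_\sigma(X)\). If \(\mathrm{Eq}=\{\mu\}\), then for each rational ball \(B\) I can semidecide whether \(\{\mu\}\subseteq B\). Searching over balls of radius \(2^{-k}\) then produces approximations to \(\mu\) to any precision, so \(\mu\) is computable, uniformly in the data.
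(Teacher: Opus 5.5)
Your proof is correct and follows essentially the same route as the paper. The paper likewise obtains \(\M_\sigma(X)\) as a recursively compact set and writes the equilibrium set as \(\M_\sigma(X)\setminus\bigcup_{n}\bigcup_k\{F_n<q_k\}\) using rationals \(q_k\nearrow P_X(\varphi)\); it then uses that removing a lower semicomputable open set preserves recursive compactness and that a recursively compact singleton is a computable point. Your nonemptiness check in Step 4 is harmless but not needed, since recursive compactness does not require the set to be nonempty.
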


\begin{proof}
Since \(X\) is recursively compact, the set of probability measures supported
on \(X\) is recursively compact
\cite[Lemma~2.5.1 and Proposition~2.4.4]{GHRinvariant}.  The shift
push-forward is computable \cite[Theorem~3.1.1]{GHRinvariant}, so its fixed
point set
\(K=\M_\sigma(X)\) is recursively compact, uniformly from the given name for
\(X\).  Integration against \(\varphi\) is computable and continuous
\cite[Corollary~4.3.2]{HoyrupRojas}.  Hence the
functions
\[
 F_n^\varphi(\mu)
 =\frac1nH_n(\mu)+\int\varphi\,d\mu
\]
are uniformly computable and continuous, while \eqref{eq:blockformula}
identifies their infimum with free energy.  Put \(s=P_X(\varphi)\).  Since
\(s\) is lower semicomputable, let \(q_k\nearrow s\) be a computable sequence
of rationals.  For every \(n\),
\[
 \{\mu:F_n^\varphi(\mu)<s\}
 =\bigcup_{k\geq1}\{\mu:F_n^\varphi(\mu)<q_k\}.
\]
The right side is a uniformly enumerable union of effectively open sets, so
these strict sublevel sets are lower semicomputable open, uniformly in \(n\),
and
\[
 \Ecal(\varphi)
 =
 K\setminus
 \bigcup_{n\geq1}\{\mu:F_n^\varphi(\mu)<s\}.
\]
If \(\mu\) belongs to the right-hand side, then
\(F_n^\varphi(\mu)\geq s\) for every \(n\).  Its free energy is therefore at
least \(s\), and hence equals \(s\) by the definition of pressure.  Conversely,
every equilibrium state belongs to the right-hand side.  Removing a uniformly
lower semicomputable open set from the recursively compact set \(K\) gives a
recursively compact set.  This is the finite-alphabet instance of the argument
in \cite[Lemma~6.3]{BHLZequilibrium}.  Finally, a recursively compact singleton
is a computable point \cite[Proposition~2.4.1(1)]{GHRinvariant}.
\end{proof}

\section{Finite cores and sequential pressure}
\label{sec:sequential-boundary}

\subsection{Finite-core exhaustion}

Let \(X=X(\G)\), and retain the arbitrary generator enumeration and finite
cores \(X_m\) fixed in the introduction.  We now prove
\Cref{thm:sequential-exhaustion}.  We use the countable-full-shift variational
principle of Burr, Das, Wolf, and Yang
\cite[Theorem~27]{BDWYpressure}; for foundational countable-state
thermodynamic formalism, see \cite{SarigCMS}.  We first state the identities
obtained by inducing on generator boundaries, followed by their
finite-generator counterparts.

\begin{lemma}[Boundary induction]
\label{lem:boundary-coding}
Suppose that \(\G\) uniquely represents \(\Xcon(\G)\), put
\(Y=\G^{\mathbb Z}\), with \(\G\) discrete, and give \(Y\) the full-shift
metric
\[
 d_Y(y,z)=
 \begin{cases}
  0,&y=z,\\
  2^{-\min\{|j|:y_j\neq z_j\}},&y\neq z.
 \end{cases}
\]
Let \(\sigma_Y\) be the shift on generator names.  Write
\[
 c(y)=\cdots y_{-1}.y_0y_1\cdots.
\]
The discrete tower
\[
 \widehat Y=\{(y,j):y\in Y,\ 0\leq j<|y_0|\}
\]
has tower map
\[
 T(y,j)=
 \begin{cases}
  (y,j+1),&j+1<|y_0|,\\
  (\sigma_Yy,0),&j+1=|y_0|.
 \end{cases}
\]
This tower is Borel conjugate to
\((\Xcon(\G),\sigma)\) under concatenation.  In particular,
\(E_{\G}\) is Borel, and the generator-name map
\(\kappa:E_{\G}\to Y\) is a Borel conjugacy from its first-return map to
\((Y,\sigma_Y)\).  The first-return time is
\[
 r(y)=|y_0|.
\]

If \(\mu\in\M_\sigma(X)\) and \(\mu(\Xcon(\G))=1\), then
\(\mu(E_{\G})>0\).  Write
\[
 \mu_E=\frac{\mu|_{E_{\G}}}{\mu(E_{\G})},
 \qquad \eta=\kappa_*\mu_E.
\]
Then \(\eta\) is an invariant probability on \(Y\) with
\[
 R_\eta:=\int r\,d\eta=\frac1{\mu(E_{\G})}<\infty.
\]
For every \(\varphi\in C(X)\), define
\[
 \widetilde\varphi(y)
 =\sum_{j=0}^{|y_0|-1}\varphi(\sigma^jc(y)).
\]
Then
\[
 h_\mu(\sigma)=\frac{h_\eta(\sigma_Y)}{R_\eta},
 \qquad
 \int\varphi\,d\mu
 =\frac{\int\widetilde\varphi\,d\eta}{R_\eta}.
\]
\end{lemma}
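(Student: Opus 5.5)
The plan is to build the conjugacy explicitly from concatenation and to read off every remaining claim from standard facts about towers and first-return maps: Kac's lemma and Abramov's formula.

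\emph{Step 1: the concatenation map.} Define \(\Phi:\widehat Y\to\Xcon(\G)\) by \(\Phi(y,j)=\sigma^j c(y)\). Each coordinate of \(c(y)\) depends on finitely many \(y_i\). Hence \(c:Y\to\Sigma\) is continuous, and \(\Phi\) is continuous on each of the countably many clopen pieces \(\{y_0=g,\ j=i\}\). By definition of \(\Xcon(\G)\), \(\Phi\) is onto. The identity \(\Phi\circ T=\sigma\circ\Phi\) is immediate from the two cases in the definition of \(T\). Injectivity of \(\Phi\) is exactly unique representation: \(((y_i),j)\) is the pointed representation of \(\Phi(y,j)\). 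Since \(\widehat Y\) is Polish, being a countable disjoint union of closed subsets of the Polish space \(Y\), the Lusin--Souslin theorem applies. It gives that \(\Phi\) is a Borel isomorphism onto its image \(\Xcon(\G)\). In particular \(\Xcon(\G)\) is Borel and invariant. Moreover \(E_\G=\Phi(Y\times\{0\})\) is Borel, and \(\kappa=\pi_Y\circ\Phi^{-1}|_{E_\G}\) is Borel. Under \(\Phi\), the first return of \((y,0)\) to the base \(Y\times\{0\}\) occurs after exactly \(|y_0|\) steps, at \((\sigma_Y y,0)\). So \(\kappa\) conjugates the first-return map to \(\sigma_Y\), with \(r(y)=|y_0|\).

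\emph{Step 2: positivity and Kac.} Let \(\mu(\Xcon(\G))=1\). Every point of \(\widehat Y\) reaches the base within finitely many steps, so \(\Xcon(\G)=\bigcup_{n\ge0}\sigma^{-n}E_\G\). Since \(\mu\) is invariant, \(\mu(E_\G)=0\) would force \(\mu(\Xcon(\G))=0\); hence \(\mu(E_\G)>0\). The normalized restriction \(\mu_E\) is invariant under the first-return map, so \(\eta=\kappa_*\mu_E\) is \(\sigma_Y\)-invariant. Kac's lemma gives \(\int_{E_\G} r_E\,d\mu=\mu\bigl(\bigcup_n\sigma^{-n}E_\G\bigr)=1\), where \(r_E\) is the return time to \(E_\G\). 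We do not assume ergodicity: the sweep-out set has full measure, which is all that is needed. Dividing by \(\mu(E_\G)\) yields \(R_\eta=1/\mu(E_\G)\).

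\emph{Step 3: the integral formula.} Write \(\mu\) as a tower over \(\mu_E\). Explicitly, for bounded Borel \(f\),
\[
 \int f\,d\mu=\mu(E_\G)\int_{E_\G}\sum_{j=0}^{r_E(x)-1}f(\sigma^jx)\,d\mu_E(x).
\]
This is the usual decomposition of \(\Xcon(\G)\) into the disjoint levels \(\{x\in\sigma^jE_\G: j<r_E\}\), together with invariance. Taking \(f=\varphi\), which is bounded since \(X\) is compact, and transporting to \(Y\) by \(\kappa\) gives \(\int\varphi\,d\mu=\int\widetilde\varphi\,d\eta/R_\eta\).

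\emph{Step 4: entropy, the main obstacle.} Abramov's formula for the induced map gives \(h_{\mu_E}(\sigma_E)=h_\mu(\sigma)/\mu(E_\G)\). Here the dynamics are transported by Borel isomorphisms, and Kolmogorov--Sinai entropy is an isomorphism invariant of measure-preserving systems mod null sets. Therefore \(h_{\mu_E}(\sigma_E)=h_\eta(\sigma_Y)\), and dividing by \(R_\eta\) gives the claim. The delicate points are the following. First, Abramov's formula must be used in its non-ergodic form; it holds whenever \(\bigcup_n\sigma^{-n}E_\G\) has full measure, or one can apply it componentwise and integrate. Second, the entropies may be infinite on the \(Y\) side a priori. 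This is excluded because \(h_\mu(\sigma)\le\log|\A|<\infty\), so Abramov forces \(h_\eta(\sigma_Y)<\infty\). I would cite Abramov's theorem in this generality rather than reprove it.
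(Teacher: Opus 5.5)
Your proposal is correct and follows essentially the same route as the paper. The concatenation map from the tower is injective by unique representation, Lusin--Souslin makes it a Borel isomorphism onto \(\Xcon(\G)\), and Kac's formula, the excursion decomposition and Abramov's formula then give the roof mean, the integral identity and the entropy identity; your remarks on the non-ergodic sweep-out form of Abramov's formula and on finiteness of \(h_\eta(\sigma_Y)\) make explicit what the paper leaves implicit, without changing the argument.
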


\begin{proof}
The tower \(\widehat Y\) is a standard Borel space.  Concatenation defines a
Borel map
\[
 \pi(y,j)=\sigma^jc(y)
\]
onto \(\Xcon(\G)\).  Indeed, since every generator has positive length, every
fixed finite block of \(\pi(y,j)\) is determined by \(j\) and finitely many
coordinates of \(y\).  Thus \(\pi\) is Borel, and it is continuous on each
tower level.  It is injective precisely because pointed
representations are unique in the sense of
\Cref{def:unique-representation}.  The Lusin--Souslin theorem
\cite[Theorem~15.1]{Kechris} therefore shows
that \(\Xcon(\G)=\pi(\widehat Y)\) is Borel and that
\(\pi^{-1}:\Xcon(\G)\to\widehat Y\) is Borel.  Moreover,
\(\pi\circ T=\sigma\circ\pi\).
The zero level maps to \(E_{\G}\), so the same theorem shows explicitly that
\(E_{\G}\) is Borel.  The inverse of that zero-level map is the generator-name
map \(\kappa\), and it identifies the return map and return time.

We have
\[
 \Xcon(\G)=\bigcup_{n\in\mathbb Z}\sigma^nE_{\G}.
\]
Hence an invariant
probability giving full mass to \(\Xcon(\G)\) must give positive mass to
\(E_{\G}\).  Put \(E=E_{\G}\), and let \(\sigma_E\) and \(\tau_E\) denote the
first-return map and return time.  The normalized restriction
\(\mu_E=\mu|_E/\mu(E)\) is \(\sigma_E\)-invariant, and
\(\eta=\kappa_*\mu_E\) is therefore \(\sigma_Y\)-invariant.  Since
\(\tau_E=r\circ\kappa\), Kac's formula \cite{Kac} gives
\[
 R_\eta=\int_E\tau_E\,d\mu_E=\frac1{\mu(E)}.
\]
The map \(\kappa\) conjugates the induced systems, so Abramov's formula
\cite{Abramov} gives
\[
 h_\eta(\sigma_Y)=h_{\mu_E}(\sigma_E)
 =\frac{h_\mu(\sigma)}{\mu(E)}=R_\eta h_\mu(\sigma).
\]
Finally, the return excursions partition \(X\) modulo \(\mu\), and hence
\begin{align*}
 \int\widetilde\varphi\,d\eta
 &=\frac1{\mu(E)}\int_E
     \sum_{j=0}^{\tau_E(x)-1}\varphi(\sigma^jx)\,d\mu(x)\\
 &=\frac1{\mu(E)}\int_X\varphi\,d\mu
 =R_\eta\int_X\varphi\,d\mu.
\end{align*}
Rearranging the last two displays proves the stated identities.
\end{proof}

\begin{lemma}[Finite-code suspension identity]
\label{lem:finite-code-lift}
Suppose that \(\G\) uniquely represents \(\Xcon(\G)\).  Let
\(\varnothing\neq F\subset\G\) be finite, let \(\sigma_F\) be the shift on
\(F^{\mathbb Z}\), and let \(\eta\) be a \(\sigma_F\)-invariant probability.
Write \(c(y)=\cdots y_{-1}.y_0y_1\cdots\), put
\(r(y)=|y_0|\) and \(R_\eta=\int r\,d\eta\).  Since \(F\) is finite,
\[
 1\leq R_\eta\leq\max_{g\in F}|g|<\infty.
\]
For \(\varphi\in C(X)\), define
\[
 \widetilde\varphi(y)
 =\sum_{j=0}^{|y_0|-1}\varphi(\sigma^jc(y)).
\]
The normalized suspension lift \(\widehat\mu\) of \(\eta\) is an invariant
probability measure on \(X(F)\), and
\[
 h_{\widehat\mu}(\sigma)=\frac{h_\eta(\sigma_F)}{R_\eta},
 \qquad
 \int\varphi\,d\widehat\mu
   =\frac{\int\widetilde\varphi\,d\eta}{R_\eta}.
\]
Consequently, for all \(q\in\mathbb R\),
\[
 h_\eta(\sigma_F)+\int(\widetilde\varphi-qr)\,d\eta
 =R_\eta\left(h_{\widehat\mu}(\sigma)
              +\int\varphi\,d\widehat\mu-q\right).
\]
\end{lemma}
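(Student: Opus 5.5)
We prove \Cref{lem:finite-code-lift} by building the suspension explicitly, reusing the tower of \Cref{lem:boundary-coding} with \(Y\) replaced by the closed subset \(F^{\mathbb Z}\). The first step is the bound on \(R_\eta\), which is immediate: every generator is nonempty, so \(1\leq r\leq\max_{g\in F}|g|\), and we integrate against the probability \(\eta\).

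Next we construct the lift. On the finite tower
\[
 \widehat Y_F=\{(y,j):y\in F^{\mathbb Z},\ 0\leq j<|y_0|\},
\]
define \(\widehat\eta=(\eta\times\text{counting})|_{\widehat Y_F}/R_\eta\). This is a probability because its total mass is \(\int r\,d\eta/R_\eta=1\). The standard suspension computation shows it is invariant under the tower map \(T\): levels \(j\ge1\) receive mass from level \(j-1\), and level \(0\) receives \(\sigma_F\)-pushed mass from the top levels, which matches by \(\sigma_F\)-invariance of \(\eta\). Set \(\widehat\mu=\pi_*\widehat\eta\), where \(\pi(y,j)=\sigma^jc(y)\). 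Then \(\pi\circ T=\sigma\circ\pi\), so \(\widehat\mu\) is \(\sigma\)-invariant, and it is supported on \(\Xcon(F)\subset X(F)\). By unique representation of \(\G\), hence of \(F\subset\G\), \(\pi\) is injective, and it is Borel by \Cref{lem:boundary-coding}. So \(\pi\) is a Borel isomorphism from \((\widehat Y_F,T,\widehat\eta)\) onto \((\Xcon(F),\sigma,\widehat\mu)\).

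The two identities then follow as in \Cref{lem:boundary-coding}. The base \(E=\pi(F^{\mathbb Z}\times\{0\})\) has \(\widehat\mu(E)=1/R_\eta>0\). Its first-return system is conjugate via \(\kappa\) to \((F^{\mathbb Z},\sigma_F,\eta)\), since the normalized restriction of \(\widehat\eta\) to level \(0\) is exactly \(\eta\). Unique representation is what makes the first return to \(E\) coincide with the next generator boundary, with return time \(r\). Abramov's formula then gives \(h_\eta(\sigma_F)=h_{\widehat\mu}(\sigma)/\widehat\mu(E)=R_\eta h_{\widehat\mu}(\sigma)\). For the integral, write
\[
 \int\varphi\,d\widehat\mu=\frac1{R_\eta}\int\sum_{j<|y_0|}\varphi(\sigma^jc(y))\,d\eta=\frac1{R_\eta}\int\widetilde\varphi\,d\eta.
\]
Here \(\varphi\) is bounded and \(r\) is bounded, so \(\widetilde\varphi\) is integrable.

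Finally, the displayed consequence is algebra. We have
\[
 h_\eta(\sigma_F)+\int\widetilde\varphi\,d\eta-q\int r\,d\eta=R_\eta h_{\widehat\mu}(\sigma)+R_\eta\int\varphi\,d\widehat\mu-qR_\eta,
\]
and factoring out \(R_\eta\) gives the identity.

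The main obstacle is justifying that \(E\) is the right induced set, namely that the first return of \(\sigma\) to \(E\) equals \(r\circ\kappa\) and is never shorter. This is exactly where unique representation enters. Without it, a point could be a generator boundary in another parsing, so returns could occur earlier than \(r\) and Abramov's formula would yield a different ratio.
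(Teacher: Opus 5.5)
Your proposal is correct and follows essentially the same route as the paper: the normalized suspension measure on the restricted tower $\widehat Y_F$, its push-forward under concatenation, unique representation making $\pi$ an isomorphism onto its image, Abramov's formula at the zero level, and the integral identity read off from the definition of the suspension measure. The only cosmetic difference is that you apply Abramov's formula to the base $E$ inside $X$ after transporting the tower structure, whereas the paper computes $h_{\overline\eta}(T)$ on the tower and then transfers it through the isomorphism.
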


\begin{proof}
Restrict the tower in \Cref{lem:boundary-coding} to
\[
 \widehat Y_F=\{(y,j):y\in F^{\mathbb Z},\ 0\leq j<r(y)\}
\]
and give it the measure
\[
 \overline\eta(A)=\frac1{R_\eta}
 \int\sum_{j=0}^{r(y)-1}\mathbf1_A(y,j)\,d\eta(y).
\]
First,
\(\overline\eta(\widehat Y_F)=R_\eta^{-1}\int r\,d\eta=1\).  Moreover, for every
bounded measurable \(f\), invariance of \(\eta\) gives
\begin{align*}
 \int f\circ T\,d\overline\eta
 &=\frac1{R_\eta}\int\left(
     \sum_{j=1}^{r(y)-1}f(y,j)+f(\sigma_Fy,0)\right)d\eta(y)\\
 &=\frac1{R_\eta}\int\sum_{j=0}^{r(y)-1}f(y,j)\,d\eta(y)
 =\int f\,d\overline\eta.
\end{align*}
Thus \(\overline\eta\) is \(T\)-invariant.  Put
\(\widehat\mu=\pi_*\overline\eta\), where \(\pi\) is the concatenation map.
Unique representation makes \(\pi\) a measure-theoretic isomorphism between
the suspension and its image.  The zero level has
\(\overline\eta\)-measure \(1/R_\eta\), its induced map is \(\sigma_F\), and
its normalized induced measure is \(\eta\).  Abramov's formula therefore gives
\[
 h_{\widehat\mu}(\sigma)=h_{\overline\eta}(T)
 =\frac{h_\eta(\sigma_F)}{R_\eta}.
\]
The definition of \(\overline\eta\) also gives
\[
 \int\varphi\,d\widehat\mu
 =\int\varphi\circ\pi\,d\overline\eta
 =\frac1{R_\eta}\int\widetilde\varphi\,d\eta.
\]
The last display in the statement is obtained by subtracting
\(q\int r\,d\eta=qR_\eta\).
\end{proof}

\begin{proof}[Proof of \Cref{thm:sequential-exhaustion}]
For every nonempty finite \(F\subset\G\), the concatenation set \(\Xcon(F)\) is
compact.  Indeed, the generator-name shift \(F^{\mathbb Z}\) is compact, the
roof is bounded, and concatenation from the corresponding finite tower is
continuous.  Hence
\[
 X(F)=\Xcon(F)\subset\Xcon(\G).
\]
If \(\G\) is finite, it follows that \(X=\Xcon(\G)\), so all invariant
measures are sequential.  Moreover, \(X_m=X\) for all sufficiently large
\(m\), proving the result in this case.  We may therefore assume that \(\G\)
is infinite.  The same compactness argument gives
\(X_m\subset\Xcon(\G)\), and
therefore
\[
 \sup_mP_{X_m}(\varphi)\leq P_{\rm seq}(\varphi,\G).
 \tag{3.1}\label{eq:easy-sequential-bound}
\]

Assume first that \(\varphi\) is locally constant.  Fix an ergodic invariant
measure \(\mu\) satisfying \(\mu(\Xcon(\G))=1\), and put
\[
 q=h_\mu(\sigma)+\int\varphi\,d\mu.
\]
Let \(E=E_{\G}\).  By \Cref{lem:boundary-coding}, the generator-name map
\(\kappa:E\to Y\)
measure-theoretically conjugates the induced system on \(E\) to the full shift
\(Y=\G^{\mathbb Z}\), and \(\mu(E)>0\).  Let
\[
 \nu=\kappa_*\left(\frac{\mu|_E}{\mu(E)}\right)
\]
be the induced generator-name measure, write \(r(y)=|y_0|\), and let
\[
 \widetilde\varphi(y)
 =\sum_{j=0}^{r(y)-1}\varphi(\sigma^jc(y))
\]
be the induced potential.  The identities in \Cref{lem:boundary-coding} give
\begin{align}
 R_\nu:=\int r\,d\nu&=\frac1{\mu(E)}<\infty,
 \notag\\
 h_\nu(\sigma_Y)+\int\widetilde\varphi\,d\nu
 &=R_\nu q.
 \tag{3.2}\label{eq:induced-free-energy}
\end{align}
Following the normalization used in
\cite[Proposition~28]{BDWYpressure}, but taking the normalization level to be
the free energy of the present sequential measure, set
\[
 \psi=\widetilde\varphi-qr.
\]
Then
\[
 h_\nu(\sigma_Y)+\int\psi\,d\nu=0.
 \tag{3.3}\label{eq:normalized-induced-zero}
\]
Identify \(g_i\) with the symbol \(i\), put \(Y_m=F_m^{\mathbb Z}\), and, for
\(g\in\G\), let \([g]_Y=\{y\in Y:y_0=g\}\).  These are exactly the
finite-alphabet shifts in
\cite[Theorem~27]{BDWYpressure}; no ordering by generator length is used.  The
countable-full-shift variational principle in that theorem, with \(P_Y\)
denoting the countable-shift pressure, states that a uniformly continuous
potential \(\theta:Y\to\mathbb R\) with
\[
 \operatorname{osc}_1(\theta)
 =\sup_{g\in\G}\left(
     \sup_{[g]_Y}\theta-\inf_{[g]_Y}\theta\right)<\infty
\]
satisfies
\[
 \sup_m P_{Y_m}(\theta)=P_Y(\theta)
 \geq h_\xi(\sigma_Y)+\int\theta\,d\xi
\]
for every invariant probability \(\xi\) for which \(\theta\) is integrable.
We now check these hypotheses for \(\psi\).  Suppose that \(\varphi\) depends
on the coordinates \([-k,k]\).  If \(y,z\in Y\) agree on generator
coordinates \([-k,k]\), then \(\widetilde\varphi(y)=\widetilde\varphi(z)\):
because every generator is nonempty, a letter window of radius \(k\) crosses
at most \(k\) generator boundaries in either direction.
Thus \(\widetilde\varphi\) is locally constant, and hence uniformly
continuous.  On \([g]_Y\), every summand whose \([-k,k]\)-window lies inside
\(g\) is fixed.  Only the summands within \(k\) coordinates of the two ends can
vary, so
\[
 \operatorname{osc}(\widetilde\varphi|_{[g]_Y})
 \leq 4k\|\varphi\|_\infty
\]
uniformly in \(g\), with the same bound when \(|g|<2k\).  The roof \(r\) is
constant on each cylinder \([g]_Y\); indeed, \(d_Y(y,z)<1\) implies
\(y_0=z_0\) and hence \(r(y)=r(z)\).  Therefore
\(\psi=\widetilde\varphi-qr\) is uniformly continuous and has finite
one-cylinder oscillation.  Finally,
\[
 |\widetilde\varphi-qr|
 \leq(\|\varphi\|_\infty+|q|)r,
\]
and \(\int r\,d\nu<\infty\), so \(\psi\) is integrable at \(\nu\).  The theorem
and \eqref{eq:normalized-induced-zero} therefore give
\[
 \sup_mP_{Y_m}(\psi)=P_Y(\psi)
 \geq h_\nu(\sigma_Y)+\int\psi\,d\nu=0.
\]
For every \(\eps>0\), choose \(m\) such that
\(P_{Y_m}(\psi)>-\eps/2\).  By the finite-alphabet variational principle,
choose an invariant probability measure \(\eta_m\) on \(Y_m\) such that
\[
 h_{\eta_m}(\sigma_Y)+\int\psi\,d\eta_m
 >P_{Y_m}(\psi)-\eps/2>-\eps.
 \tag{3.4}\label{eq:finite-induced-approximation}
\]
Because the generator alphabet of \(Y_m\) is finite,
\(R_m:=\int r\,d\eta_m<\infty\).  Lift \(\eta_m\) through the finite code to
an invariant probability measure \(\widehat\mu_m\) on \(X_m\).
\Cref{lem:finite-code-lift} gives
\[
 h_{\eta_m}(\sigma_Y)+\int\psi\,d\eta_m
 =R_m\left(h_{\widehat\mu_m}(\sigma)
       +\int\varphi\,d\widehat\mu_m-q\right).
 \tag{3.5}\label{eq:lifted-normalized-energy}
\]
Since every generator has positive length, \(R_m\geq1\).  Equations
\eqref{eq:finite-induced-approximation}--\eqref{eq:lifted-normalized-energy}
therefore imply
\[
 h_{\widehat\mu_m}(\sigma)+\int\varphi\,d\widehat\mu_m-q
 >-\frac{\eps}{R_m}\geq-\eps.
\]
Consequently,
\[
 P_{X_m}(\varphi)
 \geq h_{\widehat\mu_m}(\sigma)+\int\varphi\,d\widehat\mu_m
 >q-\eps.
\]
Let \(\tau\) be the ergodic decomposition of a sequential measure.  Since
\(\Xcon(\G)\) is invariant and Borel,
\[
 1=\int \xi(\Xcon(\G))\,d\tau(\xi),
\]
so almost every component \(\xi\) is sequential.  Entropy and integration are
affine under ergodic decomposition, so free energy is the \(\tau\)-average of
the component free energies.  Consequently, the supremum defining sequential
pressure may be taken over ergodic sequential measures.  Taking the supremum
over \(\mu\) proves the reverse of \eqref{eq:easy-sequential-bound} for locally
constant potentials.

Finally, locally constant potentials are uniformly dense in \(C(X)\).  Both
\(\varphi\mapsto P_{\rm seq}(\varphi,\G)\) and
\(\varphi\mapsto\sup_mP_{X_m}(\varphi)\) are \(1\)-Lipschitz in the uniform
norm: for each invariant measure, changing the potential from \(\varphi_1\) to
\(\varphi_2\) changes free energy by at most
\(\|\varphi_1-\varphi_2\|_\infty\), and taking the relevant suprema preserves
this bound.  Thus the equality extends to every continuous potential.
Monotonicity of \(X_m\) gives the limit.
\end{proof}

\subsection{FSP is strictly larger than FSSP}
\label{sec:dyck-separation}

\begin{proof}[Proof of \Cref{thm:dyck-separation}]
The construction is inspired by the usual Dyck presentation; see
\cite[Example~4]{KSWergodic} and \cite[Example~5.5]{Pavlovcoded}.  Let
\(\A=\{+,-\}\), with \(\chi(+)=1\) and
\(\chi(-)=-1\).  For a word \(w=w_0\cdots w_{\ell-1}\), define
\[
 H_j(w)=\sum_{i=0}^{j-1}\chi(w_i)
 \qquad (0\leq j\leq\ell).
\]
We use the following collection as our generating set:
\[
 \G_D=
 \left\{w\in\bigcup_{\ell\geq1}\A^\ell:
 H_{|w|}(w)=0\text{ and }H_j(w)>0
                 \text{ for }1\leq j<|w|\right\}.
 \tag{3.6}\label{eq:Dyck-generators}
\]
Thus each generator returns to height zero at its endpoint and stays strictly
positive in between.  Put \(X=X(\G_D)\).

We first show that \(X\) is the full binary shift.  Let \(u\) be any finite
word over \(\A\), and write \(q_j=H_j(u)\).  Choose an integer \(M\) large
enough that
\[
 M+q_j>0\qquad(0\leq j\leq |u|).
\]
Then
\[
 +^M u\,-^{M+q_{|u|}}\in\G_D.
 \tag{3.7}\label{eq:Dyck-embedding}
\]
Indeed, its height during the copy of \(u\) is \(M+q_j>0\), and along the
final string of minuses it decreases to zero only at the endpoint.  Thus the
word has total height zero and every proper nonempty prefix has positive
height.  Since \(+-\in\G_D\), the generator in
\eqref{eq:Dyck-embedding} can be placed between bi-infinite concatenations of
copies of \(+-\).  It follows that every finite binary word occurs in a point
of \(\Xcon(\G_D)\), and therefore
\[
 X(\G_D)=\A^{\mathbb Z}.
 \tag{3.8}\label{eq:Dyck-full-shift}
\]

We next show that this presentation is unique.  For
\(x\in\A^{\mathbb Z}\), define
\[
 S_x(n)=
 \begin{cases}
  \displaystyle\sum_{i=0}^{n-1}\chi(x_i),&n>0,\\[6pt]
  0,&n=0,\\[4pt]
  \displaystyle-\sum_{i=n}^{-1}\chi(x_i),&n<0.
 \end{cases}
 \tag{3.9}\label{eq:two-sided-height}
\]
Suppose that \(x\in\Xcon(\G_D)\), and consider any decomposition of \(x\)
into generators.  The value of \(S_x\) is the same at every generator
boundary, while it is strictly larger between consecutive boundaries.  If the
common boundary height is \(c\), then \(c=\min_{k\in\mathbb Z}S_x(k)\) and
\[
 \{\text{generator boundaries of }x\}
 =\{n\in\mathbb Z:S_x(n)=c\}.
 \tag{3.10}\label{eq:Dyck-boundaries}
\]
The right-hand side depends only on \(x\), so every decomposition has the same
boundaries.  The words between consecutive boundaries and the position of the
origin are then determined.  Therefore \(\G_D\) uniquely represents
\(\Xcon(\G_D)\).

We next compute the sequential entropy.  For \(n\geq1\), let
\(D_n=\{g\in\G_D:|g|=2n\}\).  This set has Catalan cardinality
\[
 |D_n|=C_{n-1}=\frac1n\binom{2n-2}{n-1}.
\]
Indeed, deleting the first \(+\) and last \(-\) gives a bijection from \(D_n\)
to the Dyck words of semilength \(n-1\).
Let \(\eta_n\) be the uniform Bernoulli measure on \(D_n^{\mathbb Z}\), and let
\(\mu_n\) be its normalized suspension under concatenation.  The roof is the
constant \(2n\), so \(\mu_n\) is an invariant sequential measure and Abramov's
formula gives
\[
 h_{\mu_n}(\sigma)=\frac{\log C_{n-1}}{2n}\longrightarrow\log2.
 \tag{3.11}\label{eq:Dyck-Catalan-entropy}
\]
Here the limit follows, for example, from
\(C_{n-1}\sim4^{n-1}/(\sqrt\pi\,(n-1)^{3/2})\).
Since \(X(\G_D)\) is the full binary shift, no invariant measure has entropy
larger than \(\log2\).  Consequently
\[
 P_{\rm seq}(0,\G_D)=\log2=P_X(0).
 \tag{3.12}\label{eq:Dyck-full-sequential-entropy}
\]

It remains to compute the residual pressure.  The height function \(S_x\) is
bounded below for every \(x\in\Xcon(\G_D)\), since its minimum is the common
height at the generator boundaries.  Let \(\nu\) be an ergodic invariant
measure and set
\[
 a_\nu=\int\chi(x_0)\,d\nu(x).
\]
The bilateral ergodic theorem gives, for \(\nu\)-almost every \(x\),
\[
 \frac{S_x(n)}n\longrightarrow a_\nu,
 \qquad
 \frac{S_x(-n)}n\longrightarrow-a_\nu
 \quad(n\to\infty).
 \tag{3.13}\label{eq:Dyck-drift}
\]
If \(a_\nu\neq0\), then the height is unbounded below in one of the two
directions.  Such a point cannot belong to \(\Xcon(\G_D)\), and hence
\(\nu(\Xres(\G_D))=1\).

Fix \(\varphi\in C(X)\).  Since the entropy map is upper semicontinuous on a
finite-alphabet subshift, \(\varphi\) has an equilibrium state.  Free energy is
affine under ergodic decomposition and every component has free energy at most
\(P_X(\varphi)\).  Since the component free energies average to
\(P_X(\varphi)\), almost every ergodic component is again an equilibrium state.
We may therefore choose an ergodic equilibrium state \(\mu\).  If
\(a_\mu\neq0\), then \(\mu\) is residual by
\eqref{eq:Dyck-drift}.  It follows that
\[
 P_{\rm res}(\varphi,\G_D)
 \geq h_\mu(\sigma)+\int\varphi\,d\mu=P_X(\varphi).
\]
The reverse inequality follows from the variational principle, so in this
case \(P_{\rm res}(\varphi,\G_D)=P_X(\varphi)\).

Now suppose that \(a_\mu=0\).  For \(0<\delta<1\), define \(\nu_\delta\) as
follows.  First sample \(x\) according to \(\mu\).  Independently at each
coordinate, change \(x_i=-\) to \(+\) with probability \(\delta\), and leave
every other symbol unchanged.  The resulting measure is a factor of the
product of \(\mu\) with a Bernoulli measure.  Since a product of an ergodic
system with a mixing Bernoulli system is ergodic, \(\nu_\delta\) is ergodic.
The identity \(a_\mu=0\) gives \(\mu[+]=\mu[-]=1/2\), and hence
\[
 a_{\nu_\delta}=a_\mu+2\delta\mu[-]=\delta,
\]
so \(\nu_\delta\) is residual by \eqref{eq:Dyck-drift}.  Moreover,
\(\nu_\delta\to\mu\) in the weak-star topology as \(\delta\downarrow0\): for
a cylinder depending on \(L\) coordinates, the coupling changes at least one
of those coordinates with probability at most \(L\delta/2\).

We also lose arbitrarily little entropy in this perturbation.  Write \(H\) for
Shannon entropy, let \(E_i\)
indicate whether the input symbol at coordinate \(i\) was changed, and write
\(Z\) and \(Y\) for the input and output processes.  The input block is
determined by the output block and the corresponding error block, and
\(\mathbb P(E_i=1)=\delta/2\).  Therefore, for every \(N\),
\[
 H(Z_0^{N-1})
 \leq H(Y_0^{N-1})+H(E_0^{N-1})
 \leq H(Y_0^{N-1})
       +N H_2(\delta/2),
 \tag{3.14}\label{eq:Dyck-entropy-coupling}
\]
where \(H_2(t)=-t\log t-(1-t)\log(1-t)\).  The second inequality follows from
subadditivity:
\[
 H(E_0^{N-1})\leq\sum_{i=0}^{N-1}H(E_i)=NH_2(\delta/2).
\]
The error indicators need not be independent.  Dividing by \(N\) and passing
to entropy rates gives
\[
 h_{\nu_\delta}(\sigma)
 \geq h_\mu(\sigma)-H_2(\delta/2).
\]
Together with weak-star convergence, this yields
\[
 \liminf_{\delta\downarrow0}
 \left(h_{\nu_\delta}(\sigma)+\int\varphi\,d\nu_\delta\right)
 \geq h_\mu(\sigma)+\int\varphi\,d\mu=P_X(\varphi).
\]
Thus residual measures have free energies arbitrarily close to
\(P_X(\varphi)\).  The opposite inequality follows from the variational
principle, and we conclude that
\[
 P_{\rm res}(\varphi,\G_D)=P_X(\varphi)
 \qquad(\varphi\in C(X)).
 \tag{3.15}\label{eq:Dyck-universal-residual-pressure}
\]

It follows that \(P_{\rm seq}>P_{\rm res}\) never occurs for this
presentation.  Therefore
\[
 \operatorname{SSP}(X,\G_D)
 =\operatorname{FSSP}(X,\G_D)=\varnothing.
\]
On the other hand, \eqref{eq:Dyck-full-sequential-entropy} gives
\(0\in\operatorname{FSP}(X,\G_D)\), proving the claimed separation.

Finally, the fair Bernoulli measure is the unique measure of maximal entropy
of the full binary shift.  Its height function is a two-sided simple random
walk and, by recurrence, is unbounded below almost surely, so this measure is
residual.  Since it is the unique measure with entropy \(\log2\), no sequential
measure attains
the sequential entropy in \eqref{eq:Dyck-full-sequential-entropy}.  The
presentation is also effective: one can enumerate \(\G_D\) by testing finite
words, and the language of \(X\) is the full binary language.
\end{proof}

\section{Computability and consequences}
\label{sec:effective-consequences}

\subsection{Pressure and equilibrium states}

\begin{proposition}[Pressure computation from finite cores]
\label{prop:pressure-bracketing}
Let \(X=X(\G)\) be a coded shift over a finite alphabet.  Suppose that \(\G\)
uniquely represents \(\Xcon(\G)\), and that a generator oracle and a
length-ordered language oracle are given.  Let
\(\varphi:\A^{\mathbb Z}\to\mathbb R\) be computable and continuous, and
assume that \(\varphi|_X\in\operatorname{FSP}(X,\G)\).  Then
\(P_X(\varphi)\) is computable uniformly from the two oracles and a computable
name for \(\varphi\).
\end{proposition}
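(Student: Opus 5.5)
The plan is to bracket \(P_X(\varphi)\) between a computable nondecreasing sequence of lower bounds coming from the finite cores and a computable nonincreasing sequence of upper bounds coming from the language oracle.  Under the hypothesis \(\varphi|_X\in\operatorname{FSP}(X,\G)\), \Cref{thm:sequential-exhaustion} shows that the lower bounds converge to \(P_X(\varphi)\).  The upper bounds always converge to \(P_X(\varphi)\).  Once both sequences are available, we dovetail them: a rational approximation within \(2^{-k}\) is output at the first stage where an upper bound and a lower bound differ by less than \(2^{-k}\).

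\textbf{Upper bounds.}  Take a rational locally constant \(\varphi_k\) with \(\|\varphi-\varphi_k\|_\infty\le 2^{-k}\), depending on coordinates \([-k',k']\).  Using \Cref{lem:language-decidable}, compute \(\Lcal_n(X)\) exactly for every \(n\).  Then
\[
 U_{n,k}=\frac1n\log\sum_{w\in\Lcal_n(X)}\exp\Bigl(\sup_{x\in X\cap[w]}S_n\varphi_k(x)\Bigr)+2^{-k}
\]
is an upper bound for \(P_X(\varphi)\).  Here the supremum over each cylinder is taken using the finitely many language words of length \(n+2k'\) that extend \(w\), so it is computable.  It is an upper bound because the standard partition-function estimate bounds \(P_X(\varphi_k)\) by this quantity for every \(n\), and the Lipschitz estimate accounts for the \(2^{-k}\).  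By subadditivity the bracketed term converges to \(P_X(\varphi_k)\) as \(n\to\infty\).  Hence \(\inf_{n,k}U_{n,k}=P_X(\varphi)\), and \(P_X(\varphi)\) is upper semicomputable.

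\textbf{Lower bounds.}  This step is the main point to get right.  For each \(m\), read the first \(m\) generators to obtain the finite set \(F_m\).  \Cref{lem:finite-code-lift} together with compactness of \(\Xcon(F_m)\) shows that \(X_m\) is the sofic shift obtained by concatenating words of \(F_m\).  From \(F_m\) we write down an explicit labelled graph presenting \(X_m\): a cycle of length \(|g|\) for each \(g\in F_m\), all passing through a common base vertex.  Pressure of a computable potential on such a sofic shift is computable from the graph.  One route is to pass to the edge shift, which is an SFT, and compute \(P\) by locally constant approximations and Perron--Frobenius eigenvalues.  This works because the labelling map \(\pi\) from the edge shift onto \(X_m\) is finite-to-one, so pressures of \(\varphi\circ\pi\) and \(\varphi\) agree; alternatively one can cite the known computability of pressure on sofic shifts.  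Finite-to-one is justified as follows: \(\pi\) is exactly the concatenation map on the tower, and it is injective on \(\Xcon(F_m)\) by unique representation, so entropies are preserved.  Thus \(L_{m,k}\), the computed approximation of \(P_{X_m}(\varphi)\) minus its error \(2^{-k}\), is a computable lower bound for \(P_X(\varphi)\), since \(P_{X_m}(\varphi)\le P_{\rm seq}\le P_X\).  Note that the generator enumeration is arbitrary, and \(X_m\) depends only on the finite prefix \(F_m\).

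\textbf{Conclusion.}  By \Cref{thm:sequential-exhaustion} and the FSP assumption, \(\sup_{m,k}L_{m,k}=\lim_m P_{X_m}(\varphi)=P_X(\varphi)\), so \(P_X(\varphi)\) is also lower semicomputable.  A real that is both lower and upper semicomputable is computable, and the dovetailed search described above halts for each precision.  Uniformity holds because every step uses only finite portions of the two oracles and of the name of \(\varphi\).  The FSP hypothesis is used only to guarantee termination, and it plays no role in the correctness of either bound.
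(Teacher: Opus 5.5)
Your proposal is correct and follows the same route as the paper. It brackets $P_X(\varphi)$ between rational upper bounds from the length-ordered language oracle and increasing lower bounds from the computable finite-core pressures $P_{X_m}(\varphi)$, and \Cref{thm:sequential-exhaustion} together with the FSP hypothesis guarantees that the dovetailed search halts.

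The only difference is that you construct explicitly the two ingredients the paper cites from Burr, Das, Wolf, and Yang: the partition-function upper bounds of their Proposition~25, and pressure computability on the sofic cores from their Corollary~24. Your argument for the latter is sound: unique representation makes the flower-graph edge shift topologically conjugate to $X_m$.
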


\begin{proof}
Put \(P=P_X(\varphi)\).  From the language oracle and a computable name for
\(\varphi\), the method of Burr, Das, Wolf, and Yang
\cite[Proposition~25]{BDWYpressure} gives a computable sequence of rational
upper bounds
\[
 U_s\searrow P.
\]
For each \(m\), the finite prefix \(F_m\) of the generator oracle effectively
presents the sofic shift \(X_m\), and \(P_{X_m}(\varphi)\) is computable
uniformly from that prefix and the name for \(\varphi\)
\cite[Corollary~24]{BDWYpressure}.  In particular, we can compute rationals
\(\ell_{m,t}\) such that
\[
 \ell_{m,t}\leq P_{X_m}(\varphi)<\ell_{m,t}+2^{-t}.
\]
Dovetail these computations and set
\[
 L_s=\max\{\ell_{m,t}:1\leq m,t\leq s\}.
\]
This gives a computable sequence
\[
 L_s\nearrow\sup_{m\geq1}P_{X_m}(\varphi).
\]
By \Cref{thm:sequential-exhaustion} and the FSP hypothesis, the latter
supremum is \(P_{\rm seq}(\varphi,\G)=P\).  Thus \(L_s\leq P\leq U_s\), and
both bounds converge to \(P\).  Given \(k\), run the two computations until
\(U_s-L_s<2^{-k}\), and output any rational number in \([L_s,U_s]\).  This
search terminates because both sequences converge to \(P\).
\end{proof}

\begin{proof}[Proof of \Cref{cor:pressure-computability}]
Pressure computability follows from \Cref{prop:pressure-bracketing}.  The
language oracle computes a recursively compact name for \(X\) by
\Cref{lem:effective-presentation}, so the conclusions about equilibrium states
follow from \Cref{thm:equilibrium}.
\end{proof}

\subsection{Equality examples}
\label{sec:equality-examples}

Both examples use the code from \cite[Example~5.4]{Pavlovcoded}.  Let
\(B=\{1,2,3,4\}\), let \(\A=\{0,1,2,3,4\}\), put
\(k(n)=\lfloor\log_2n\rfloor\), and define
\[
 \G=\{w0^{k(n)}:n\geq2,\ w\in B^n\}.
 \tag{4.1}\label{eq:Pavlov-Markov-code}
\]
Write \(X=X(\G)\).

\begin{lemma}[Effective presentation of the Pavlov code]
\label{lem:Pavlov-code-effective}
The collection \(\G\) uniquely represents \(\Xcon(\G)\), and both its
generator oracle and the length-ordered language oracle for \(X\) are
computable.
\end{lemma}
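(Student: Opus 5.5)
The lemma has three parts, and we treat them in the order: unique representation, computability of the generator oracle, and computability of the length-ordered language oracle. The last is the real work.

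\emph{Unique representation.} Every generator has the form \(w0^{k(n)}\) with \(w\in B^n\) nonempty, and \(B\) does not contain \(0\). So in any point of \(\Xcon(\G)\), the generator boundaries are exactly the positions where a maximal run of symbols from \(B\) begins. Each such run is preceded by a (possibly empty) run of zeros. The run of \(B\)-symbols determines \(n\), and the number of zeros after it must then equal \(k(n)\).

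The boundaries are therefore determined by \(x\) alone. Hence so are the generators between consecutive boundaries and the position of the origin. One point needs care: when \(k(n)=0\), the next \(B\)-run starts immediately. This cannot occur, because \(n\geq2\) gives \(k(n)\geq1\). Consequently every boundary is preceded by a \(0\) and followed by a symbol of \(B\), so boundaries are exactly the transitions \(0\to B\).

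\emph{Generator oracle.} We list the generators by increasing \(n\) and lexicographically in \(w\). This enumeration is clearly computable.

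\emph{Language oracle.} It suffices to decide, uniformly in a finite word \(u\), whether \(u\in\Lcal(X)\); we can then list words by length and test each one. Because \(X=\overline{\Xcon(\G)}\), the word \(u\) lies in \(\Lcal(X)\) iff it occurs as a subword of some finite concatenation of generators. We characterize this combinatorially.

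Decompose \(u\) into maximal alternating blocks, \(u=z_0b_1z_1b_2\cdots b_tz_t\), where the \(b_i\) are nonempty words over \(B\) and the \(z_i\) are runs of zeros, with \(z_0\) and \(z_t\) possibly empty. Then:
\begin{enumerate}
\item An interior block \(b_i\) with \(1<i<t\) is a complete generator body. The zero run following it must have length exactly \(k(|b_i|)\).
\item The first block \(b_1\) may be a truncated body of some length \(n\geq|b_1|\), provided \(z_0\) is empty. The zero run \(z_1\) must then equal \(k(n)\) for some such \(n\). Because \(k\) is unbounded and nondecreasing, the set of possible values is \(\{k(n):n\geq|b_1|\}\), which is all integers at least \(k(|b_1|)\). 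If \(z_0\) is nonempty, then \(b_1\) is a full body and \(|z_1|=k(|b_1|)\) exactly, unless \(t=1\) and \(z_1\) is terminal.
\item A leading zero run \(z_0\) must have length at most \(\sup k\), which is unconstrained. A trailing zero run \(z_t\) must have length at most \(k\) of its body's possible length. A final block \(b_t\) with \(z_t\) empty may be a truncated prefix of a longer body.
\item If \(t=0\), then \(u=0^j\) and it is always legal, since \(k\) is unbounded.
\end{enumerate}
Each of these conditions is a finite check involving \(k(n)=\lfloor\log_2 n\rfloor\), so membership is decidable. Conversely, if all the checks pass, we can construct an explicit concatenation containing \(u\) by choosing appropriate lengths \(n\) for the boundary blocks.

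The main obstacle is making this boundary case analysis correct, especially when \(t\leq2\) and the first and last blocks interact. To organize it, we will state that a word is legal iff there exist \(n_1\geq|b_1|\) and \(n_t\geq|b_t|\) satisfying finitely many explicit constraints. We will note that the existential can be bounded: it suffices to search \(n\leq\max(|u|,2^{|u|+1})\), because \(k\) grows. Within that bound a brute-force search over concatenations is effective, and membership becomes decidable directly.
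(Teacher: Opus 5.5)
Your arguments for unique representation (generator boundaries are exactly the $0\to B$ transitions, since $k(n)\ge1$ for $n\ge2$) and for the generator oracle agree with the paper's. The language procedure you finally settle on is also the paper's: a brute-force search over finite concatenations whose generators have body length at most $2^{|u|+1}$.

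The gap is that the one nontrivial step, that this bounded search is complete, is only asserted (``because $k$ grows''). Growth of $k$ covers the easy cases. A generator lying inside the occurrence has $n\le|u|$, and one whose entire zero block is visible has $k(n)\le|u|$, hence $n<2^{|u|+1}$. But a witnessing concatenation may begin with a generator of which only some zeros are visible. It may end with one of which only a prefix of the body is visible, or the occurrence may sit inside a single generator. In each of these cases the generator can have an arbitrarily long body.

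You have to show that such a generator can be exchanged for a short one without altering the visible part of $u$. This is the content of the paper's replacement argument:
\begin{itemize}
\item a left endpoint showing only $0^t$ is replaced, aligned at its right end, by one with $n'=2^t$, so that $k(n')=t$;
\item a right endpoint showing only $v\in B^a$ is replaced, aligned at its left end, by one with $n'=\max\{2,a\}$;
\item an occurrence inside one generator is moved into one with $n'=\max\{2,|u|\}$, $2^{|u|}$, or $\max\{2,a,2^t\}$, according to whether it shows only positive symbols, only zeros, or $a$ positive symbols followed by $t$ zeros.
\end{itemize}
Together with the fact that an occurrence meets at most $|u|+2$ generators, this is what makes the search both finite and correct.

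The explicit block characterization you sketch would be a legitimate, and more informative, alternative route. As written, however, it does not yet prove decidability. Its sufficiency direction is only asserted, and rule (2) mishandles exactly the $t\le2$ interactions you flagged. Read literally, it rejects $u=11110$: here $t=1$, $z_0$ is empty and $|b_1|=4$, and no $n\ge4$ has $k(n)=1$. Yet $u$ is a prefix of the generator $1111\,00$. When $z_0$ is empty and $z_1$ is terminal, the correct requirement is $|z_1|\le k(n)$ for some $n\ge\max\{2,|b_1|\}$, which always holds.

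Also, ``unbounded and nondecreasing'' does not imply that $\{k(n):n\ge m\}$ contains every integer at least $k(m)$; consider $2\lfloor\log_2 n\rfloor$. You need that $\lfloor\log_2 n\rfloor$ increases by at most one at each step.
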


\begin{proof}
Within every point of \(\Xcon(\G)\), the transitions \(0B\) are exactly the
generator boundaries.  The boundary locations, the words between consecutive
boundaries, and the position of the origin are therefore determined by the
point.  This proves unique representation.  To compute the generator oracle,
list the finitely many words \(w0^{k(n)}\), with \(w\in B^n\), one value of
\(n\geq2\) at a time.

We now show that membership in each finite language level is decidable.  Fix a
word \(u\) of length \(L\).  If \(u\in\Lcal(X)\), then the cylinder defined by
\(u\) meets \(X=\overline{\Xcon(\G)}\).  Since the cylinder is open, it also
meets \(\Xcon(\G)\), and the relevant finite window is contained in a finite
concatenation of generators.  The converse is immediate.  Any such occurrence
intersects at most \(L+2\) generators: at most two are only partially visible,
and every generator lying entirely inside the occurrence occupies at least one
of its \(L\) coordinates.  A generator lying entirely inside the occurrence
has positive-block length at most \(L\), so only the first and last intersected
generators can have unbounded length.

We claim that the endpoint generators can also be replaced by generators whose
positive-block lengths are at most \(2^{L+1}\), without changing the occurrence
of \(u\).  First suppose that the occurrence intersects more than one
generator.  The visible suffix of the left endpoint is either \(0^t\), or
\(v0^t\) with \(v\in B^a\), in which case the entire zero block is visible.
Align its replacement at the right end.  In the first case \(t\geq1\), and we
take \(n'=2^t\); in the second, take
\[
 n'=\max\{2^t,a\}.
\]
The original generator has positive-block length less than \(2^{t+1}\), so
this choice still satisfies \(\lfloor\log_2n'\rfloor=t\).  In either case, the
visible positive symbols can be extended arbitrarily within \(B\).

The visible prefix of the right endpoint is either a word \(v\in B^a\), or
\(v0^t\), in which case the entire positive block is visible.  Align its
replacement at the left end.  In the first case take
\(n'=\max\{2,a\}\) and extend the positive word arbitrarily.  In the second,
the original positive-block length is at most \(L\), so no replacement is
needed.

It remains to consider an occurrence contained in a single generator.  If it
contains only positive symbols, take \(n'=\max\{2,L\}\).  If it contains only
zeros, take \(n'=2^L\).  Finally, if it crosses the positive-to-zero transition
and displays \(a\) positive symbols followed by \(t\) zeros, take
\[
 n'=\max\{2,a,2^t\}
\]
and align the replacement at that transition.  Each replacement described
above has positive-block length at most \(2^{L+1}\).

Consequently, \(u\) is allowed if and only if it occurs in a concatenation of
at most \(L+2\) generators whose positive-block lengths are at most
\(2^{L+1}\).  There are finitely many such concatenations, so enumerating them
and every possible starting offset gives a finite search deciding membership
in \(\Lcal_L(X)\).  Running this decision procedure one length at a time gives
the ordered language oracle.
\end{proof}

\begin{example}[Residual MME at equality]
\label{ex:pavlov-critical}
Pavlov's analysis
\cite[Example~5.4 and Proposition~5.1]{Pavlovcoded} shows that the
concatenation entropy of this presentation is \(\log4\) and that null
recurrence prevents this entropy from being attained by a sequential measure.
Every generator contains a zero, while every finite \(B\)-word occurs inside
an arbitrarily long positive block, so \(B^{\mathbb Z}\subset X\) and
\(B^{\mathbb Z}\cap\Xcon(\G)=\varnothing\).  His description of the limit set
also shows that the unique global MME is the residual uniform Bernoulli measure
on \(B^{\mathbb Z}\).  Consequently,
\[
 \hcon(X,\G)=\hres(X,\G)=\htop(X)=\log4.
\]
By \Cref{lem:Pavlov-code-effective}, \Cref{cor:main-coded} computes the unique
MME, even though no sequential measure realizes the common entropy.
\end{example}

\begin{example}[A residual Markov equilibrium at equality]
\label{ex:critical-Markov}
We now give a nonzero-potential version of the preceding phenomenon on the
same coded shift.  Its equilibrium state will be a correlated Markov measure
rather than the uniform MME.

The coded shift is not the full shift on \(\A\): for example, \(010\) is
forbidden because every positive block between zero blocks has length at least
two.  On the other hand,
\[
 B^{\mathbb Z}\subset X,
\]
since every finite word over \(B\) occurs arbitrarily far from the zeros in a
generator with a long positive block.  No point of \(B^{\mathbb Z}\) is a
bi-infinite concatenation of generators, so
\(B^{\mathbb Z}\subset\Xres(\G)\).  Thus \(B^{\mathbb Z}\) is a proper
residual subsystem of \(X\).  The zero blocks in the generators have unbounded
length, so \(0^\infty\in X\); it has no generator decomposition and is
therefore residual as well.

Consider the strictly positive stochastic matrix
\[
 P_{ij}=
 \begin{cases}
  1/2,&i=j,\\
  1/6,&i\neq j,
 \end{cases}
 \qquad i,j\in B,
\]
and the computable nearest-neighbor potential
\[
 \varphi(x)=
 \begin{cases}
  \log(4P_{x_0x_1}),&x_0,x_1\in B,\\
  0,&0\in\{x_0,x_1\}.
 \end{cases}
 \tag{4.2}\label{eq:critical-Markov-potential}
\]
Thus \(\varphi=\log2\) on equal positive pairs and
\(\varphi=\log(2/3)\) on unequal positive pairs.  We will prove that
\[
 P_{\rm seq}(\varphi,\G)
 =P_{\rm res}(\varphi,\G)
 =P_X(\varphi)=\log4,
 \tag{4.3}\label{eq:critical-Markov-tie}
\]
that no sequential measure attains this value, and that the unique global
equilibrium state is residual.

First consider the residual side.  Write
\([0B]=\bigcup_{b\in B}[0b]\) and \([B0]=\bigcup_{b\in B}[b0]\).  If an
ergodic residual measure \(\mu\) gave positive mass to \([0B]\), the bilateral
Birkhoff theorem would give \(\mu\)-almost every point infinitely many
occurrences of \(0B\) in both directions.  Consider the word between two
consecutive occurrences.  This finite word occurs in an actual concatenation
because every word in
\(\Lcal(X)\) occurs in \(\Xcon(\G)\).  Within a concatenation, the transitions
\(0B\) are exactly the generator boundaries.  Since the two occurrences were
consecutive, the word between them is a single generator.  Applying this in
both directions gives a bi-infinite generator decomposition, contradicting
residuality.  It follows that every ergodic residual measure has
\(\mu([0B])=0\).  Indeed, invariance gives the flux identity
\[
 \mu([0B])=\mu([0])-\mu([00])
 =\mu(\sigma^{-1}[0])-\mu([00])=\mu([B0]).
\]
Thus \(\mu([B0])=0\) as well, so the indicator of membership in \(B\) is almost
surely constant along an orbit.  The measure is therefore supported either on
\(B^{\mathbb Z}\) or on the fixed point \(0^\infty\).  The same classification
follows from the description of the limit set in
\cite[Example~5.4 and Lemma~4.1]{Pavlovcoded}.  On \(B^{\mathbb Z}\), the
transfer matrix is \(4P\).  The matrix \(P\) is positive and doubly stochastic,
so \(4P\) has Perron eigenvalue \(4\) with constant left and right eigenvectors.
The standard transfer-matrix construction therefore gives transition matrix
\(P\) and the uniform stationary vector, and positivity gives uniqueness.  Thus
the equilibrium state for the restricted system on \(B^{\mathbb Z}\) is the
stationary Markov measure \(\mu_P\).  Since every entry of \(P\) is positive,
\(\mu_P\) has full support on the residual subsystem
\(B^{\mathbb Z}\), and therefore does not have full support on \(X\).  Directly,
\[
 h_{\mu_P}(\sigma)+\int\varphi\,d\mu_P
 =-\sum_{i,j}\tfrac14P_{ij}\log P_{ij}
  +\sum_{i,j}\tfrac14P_{ij}\log(4P_{ij})
 =\log4.
\]
The fixed point \(0^\infty\) has free energy zero.  By ergodic decomposition,
the free energy of any residual measure is the average of the free energies of
the ergodic components just classified.  Perron--Frobenius uniqueness on
\(B^{\mathbb Z}\) therefore makes equality possible only for \(\mu_P\).  Hence
\(P_{\rm res}(\varphi,\G)=\log4\), uniquely attained by \(\mu_P\).  This
measure is not a product measure, since
\(\mu_P([11])=1/8\neq1/16=\mu_P([1])^2\).

For the sequential side, put \(q=\log4\).  If
\(g=w_1\cdots w_n0^{k(n)}\), then the sum of \(\varphi\) over the coordinates
of this generator in any concatenation is independent of the adjacent
generators.  Denote this sum by
\[
 \Phi(g)=\sum_{i=1}^{n-1}\log(4P_{w_iw_{i+1}}).
\]
Since \(P\) is stochastic,
\[
 \sum_{w\in B^n}e^{\Phi(w0^{k(n)})}
 =\boldsymbol1^{\mathsf T}(4P)^{n-1}\boldsymbol1=4^n.
\]
The last equality follows from
\((4P)\boldsymbol1=4\boldsymbol1\) and
\(\boldsymbol1^{\mathsf T}\boldsymbol1=4\).
Consequently the normalized generator weights
\[
 p_g=e^{\Phi(g)-q|g|}
\]
satisfy
\[
 \sum_{g\in\G}p_g
 =\sum_{n=2}^\infty4^{-k(n)}
 =\sum_{j=1}^\infty2^j4^{-j}=1,
 \tag{4.4}\label{eq:critical-Markov-normalization}
\]
where the middle sum groups the \emph{total} weight of all generators with a
fixed positive-block length \(n\).  On the other hand,
\[
 \sum_{g\in\G}|g|p_g
 =\sum_{j=1}^\infty4^{-j}
   \sum_{n=2^j}^{2^{j+1}-1}(n+j)=\infty.
 \tag{4.5}\label{eq:critical-Markov-infinite-mean}
\]
Indeed, the \(j\)-th summand is at least
\[
 4^{-j}\,2^j\,2^j=1.
\]

Let \(\mu\) be an ergodic sequential measure and let \(\eta\) be its induced
generator-name measure from \Cref{lem:boundary-coding}.  Its roof mean
\(R_\eta\) is finite.  If
\(\rho(g)=\eta(\{y\in Y:y_0=g\})\), write
\[
 H(\rho)=-\sum_g\rho(g)\log\rho(g),
 \qquad
 D(\rho\Vert p)=\sum_g\rho(g)\log\frac{\rho(g)}{p_g}.
\]
Let \(N=|g|\) be the generator-length random variable under \(\rho\), and let
\(H(N)\) denote the entropy of its distribution.  For any \(a>0\), Gibbs
inequality applied to this distribution and the geometric law
\(q_a(r)=(1-e^{-a})e^{-a(r-1)}\), \(r\geq1\), gives
\[
 H(N)\leq-\log(1-e^{-a})+a(\mathbb E_\rho N-1)<\infty.
\]
Conditional on \(N=r\), there are at most \(4^r\) generator names, and hence
\[
 H(\rho)=H(N)+H(g\mid N)
 \leq H(N)+(\log4)\mathbb E_\rho N<\infty.
\]
Moreover,
\[
 |\log p_g|=\left|\Phi(g)-q\,|g|\right|
 \leq(\|\varphi\|_\infty+q)|g|,
\]
so \(\sum_g\rho(g)|\log p_g|<\infty\).  Now
\(R_\eta=\sum_g\rho(g)|g|\) and
\(\int\widetilde\varphi\,d\eta=\sum_g\rho(g)\Phi(g)\).  The identities in
\Cref{lem:boundary-coding}, followed by
\(\log p_g=\Phi(g)-q|g|\), therefore give
\begin{align}
 h_\mu(\sigma)+\int\varphi\,d\mu-q
 &=\frac{h_\eta(\sigma_Y)+\int\widetilde\varphi\,d\eta-qR_\eta}
         {R_\eta}\notag\\
 &=\frac{h_\eta(\sigma_Y)+\sum_g\rho(g)\log p_g}{R_\eta}.
 \tag{4.6}\label{eq:critical-Markov-relative-entropy}
\end{align}
The entropy-rate bound and Gibbs inequality now give
\[
 h_\eta(\sigma_Y)+\sum_g\rho(g)\log p_g
 \leq H(\rho)+\sum_g\rho(g)\log p_g
 =-D(\rho\Vert p)\leq0.
\]
If the left-hand side were zero, then both inequalities would be equalities.
In particular, \(D(\rho\Vert p)=0\), so \(\rho=p\).  This is impossible because
\(p\) has infinite mean by \eqref{eq:critical-Markov-infinite-mean}, whereas
\(\rho\) has mean \(R_\eta<\infty\).  Hence every ergodic sequential
measure has free energy strictly below \(q\), and the same is true for every
sequential measure by ergodic decomposition and affinity: equality would
force almost every ergodic component to have free energy \(q\).  To see that
the supremum is still \(q\),
let \(F_\ell\uparrow\G\) be an increasing exhaustion by finite sets, put
\(Z_\ell=\sum_{g\in F_\ell}p_g\), and take the Bernoulli generator-name measure
with marginal \(p_g/Z_\ell\) on \(F_\ell\).
Let \(R_\ell<\infty\) be its roof mean and \(\mu_\ell\) its lift.  Then
\[
 H(p/Z_\ell)+\sum_{g\in F_\ell}\frac{p_g}{Z_\ell}\log p_g
 =\log Z_\ell,
\]
and \(Z_\ell\uparrow1\), while \(R_\ell\geq1\).  Thus
\eqref{eq:critical-Markov-relative-entropy} becomes
\[
 h_{\mu_\ell}(\sigma)+\int\varphi\,d\mu_\ell-q
 =\frac{\log Z_\ell}{R_\ell}\longrightarrow0.
\]
It follows that \(P_{\rm seq}(\varphi,\G)=q\), but the supremum is not attained.
The residual
calculation now proves \eqref{eq:critical-Markov-tie} and shows that \(\mu_P\)
is the unique global equilibrium state.  Indeed, every invariant measure
decomposes over the invariant Borel partition
\(\Xcon(\G)\sqcup\Xres(\G)\).  If
\(0<\alpha:=\mu(\Xcon(\G))<1\), its free energy is
\[
 \alpha\left(h_{\mu_{\rm seq}}(\sigma)
       +\int\varphi\,d\mu_{\rm seq}\right)
 +(1-\alpha)\left(h_{\mu_{\rm res}}(\sigma)
       +\int\varphi\,d\mu_{\rm res}\right),
\]
where the two measures are the normalized restrictions.  The endpoint cases
are the restrictions themselves.  Thus any positive sequential component
makes the free energy strictly smaller than \(q\), while for \(\alpha=0\)
equality holds only for \(\mu_{\rm res}=\mu_P\).
Since \(\varphi\) is computable,
\Cref{lem:Pavlov-code-effective} and \Cref{cor:pressure-computability} together
show that \(\mu_P\) is computable from the generator and language oracles.

\end{example}

\subsection*{Acknowledgements and AI disclosure}
OpenAI Codex (desktop application; GPT-5.6 Sol) was used to explore proof
strategies, test intermediate arguments, identify relevant literature, and
assist with manuscript revision.  The author personally verified every
adopted mathematical argument and citation and assumes sole responsibility
for the final work.

\subsection*{Competing interests and funding}
The author declares no competing interests.  No funding was received for this
work.

\end{document}